\newcommand{\dR}{\ensuremath{\mathbb{R}}} 
\newcommand{\R}{\dR}
\newcommand{\rr}{\mathbb{R}}
\newcommand{\Sp}{\mathbb{S}}
\newcommand{\Pt}{(P_t)_{t\ge 0}}
\newcommand{\1}{\mathbbm{1}}
\newtheorem{theorem}{Theorem}
\newtheorem{proposition}[theorem]{Proposition}
\newtheorem{lemma}[theorem]{Lemma}
\newtheorem{corollary}[theorem]{Corollary}
 \newtheorem{propdef}[theorem]{Definition-Proposition}
 \newtheorem{fact}[theorem]{Fact}
\theoremstyle{remark} \newtheorem{remark}[theorem]{Remark}
\def\BL{{\bf BL}}
\begin{document}   

\title{Correlation and Brascamp-Lieb  inequalities \\ for Markov semigroups}
\author{F.~Barthe, D.~Cordero-Erausquin, M.~Ledoux and  B.~Maurey}
\date{}
\maketitle

\begin{abstract}
This  paper builds upon several  recent
works, where semigroup proofs of Brascamp-Lieb 
inequalities are
provided in various settings (Euclidean space, spheres and symmetric groups).
Our  aim is twofold. Firstly, we provide  a general, unifying,    framework based on Markov generators, in order to cover a variety
of examples of interest going beyond previous investigations. Secondly, we put forward the combinatorial reasons for which unexpected 
exponents occur in these inequalities.
\end{abstract}

\tableofcontents
 
\section{Introduction}
A celebrated inequality of Brascamp and Lieb \cite{brasl76bcyi,lieb90gkgm} asserts that given linear surjective maps between Euclidean spaces
$B_i:H\to H_i$, $i=1,\ldots, m,$ and given positive coefficients $(c_i)_{i=1}^m,$ the best constant $C$ such that
for all non-negative measurable functions $f_i:H_i\to \R$ it holds
$$\int_H \prod_{i=1}^m f_i(B_i x)^{c_i} dx \le C \prod_{i=1}^m \left(\int_{H_i} f_i(y) \, dy \right)^{c_i}$$
can be computed by requiring the inequality on centered Gaussian functions only (i.e. of the form  $f_i=e^{-Q_i}$ where $Q_i$
is a positive definite quadratic form). This far-reaching extension of Hölder's inequality found applications
in harmonic analysis but also in convex geometry.
Indeed, a particular case called the geometric Brascamp-Lieb inequality, put forward by Ball \cite{ball89vscr} when $\mathrm{dim}(H_i)=1$,
leads to many precise volume estimates. The general \emph{geometric} version corresponds to the case when for all $i=1,\ldots,m$, 
$B_iB_i^*=\mathrm{Id}_{H_i}$ and $\sum_i c_i B_i^*B_i=\mathrm{Id}_H$, where $B_i^*$ is the adjoint of $B_i$. Under
these hypotheses the optimal constant in the Brascamp-Lieb inequality is $C=1$. More concretely: let $E_1,\ldots, E_m$ 
be vector subspaces of   $\mathbb R^n$ with its canonical Euclidean structure. Denoting by $P_{E_i}$ the orthogonal projection
onto $E_i$, if $\sum_i c_i P_{E_i}=\mathrm{Id}_{\R^n}$ then for all measurable functions $f_i:\R_i \to \R^+$ it holds
$$\int_{\R^n} \prod_{i=1}^m f_i(P_{E_i}x)^{c_i}dx \le \prod_{i=1}^m \left(\int_{E_i} f_i\right)^{c_i}.$$ 

There exist by now many different proofs of the Brascamp-Lieb theorem: symmetrization when $\mathrm{dim}(H_i)=1$ \cite{brasl76bcyi},
 study of Gaussian kernels \cite{lieb90gkgm}, optimal transport \cite{bart98rfbl}. Heat flow derivation were presented in the recent works \cite{C-L-L1} for
 $\mathrm{dim}(H_i)=1$ and \cite{B-C-C-T1} in general: the geometric Brascamp-Lieb inequality is established by interpolating between the left and
right hand side of the inequality, thanks to the Heat semigroup. The case when optimal Gaussian functions exist follows from the geometric case by 
a clever change of variables and turns out to be generic (the non-trivial remaining cases are in a sense ``boundary'' cases and can
be decomposed into simpler ones). So the geometric case is also essential from a theoretical viewpoint. The Heat flow proofs required
a more precise study of the structure of the problem, since the finiteness of the constant and the existence of Gaussian maximizers     
have to be treated beforehand. They  lead to a complete treatment of the equality cases \cite{C-L-L1, B-C-C-T1,V}. They were  also
flexible enough to adapt to other ambient spaces, as observed by  Carlen, Lieb and Loss \cite{C-L-L1} who 
 discovered the following Young type inequality
on the Euclidean sphere $\mathbb S^{n-1}$: for all measurable functions $f_i:[-1,1]\to \R^+$, it holds
\begin{equation}\label{eq:BLsphere} \int_{\mathbb S^{n-1}} \prod_{i=1}^{n} f_i(x_i) \, d\sigma(x) \le
 \prod_{i=1}^{m}\left( \int_{\mathbb S^{n-1}} f_i(x_i)^2 d\sigma(x)\right)^{\frac{1}{2}},
\end{equation}
where $\sigma$ is the uniform probability measure on $\mathbb S^{n-1}$.
This inequality can be understood as a correlation inequality: the coordinates of a uniform random vector on the sphere
are not independent, so there is no Fubini equality. Instead, 
 inequality \eqref{eq:BLsphere} holds and is a lot better than
Hölder's inequality, which would involve $L^n$-norms of the functions. In a sense, the exponent $2$, which turns out to 
be optimal, shows that the coordinate functions are not too far from being independent. The above inequality was extended
to a spherical version of the geometric Brascamp-Lieb inequality in \cite{B-CE-M}.
Carlen, Lieb and Loss also proved a similar inequality for the set of permutations of a finite set and coordinate functions
 \cite{C-L-L2}.

In this paper we provide a general framework based on Markov generators which allows us to unify the existing results, derive extensions and 
clarify the conditions which are required to prove correlation inequalities. 
Decompositions of the identity  as \eqref{eq:decompx} play an important role. In the case of 
functions depending of blocks of coordinates, we put forward a general set of conditions,
which is similar  to the hypotheses of Finner's theorem for product probability spaces \cite{F}, but applies to
particular non-product spaces. See e.g. Propositions~\ref{prop:coordSOn}, \ref{prop:coordSn} and Section~\ref{sssec:finner}.

The structure of the exposition is as follows.
The abstract framework is described 
in Section~\ref{sec:abs} where a general condition is stated. The next sections provide concrete illustrations of  Proposition~\ref{prop:abstract}.
Section~\ref{sec:diff}
deals with the case where our Markov generator is a diffusion, as it is the case in some classical geometric and probabilistic situations. In particular, we shall put forward the algebraic content of our condition in the case of Riemannian Lie groups (with emphasis on the orthogonal group $SO(n)$) and their quotients.  We study
discrete models and their combinatorics in Section~\ref{sec:disc},  and the case where the generator is a sum of squares in Section~\ref{sec:square}. The final section is devoted to related -- dual, more precisely -- entropy inequalities for the marginals of a probability distribution. We state there  an abstract superadditive inequality for the associated Fisher information, that leads to a somewhat different
route to Brascamp-Lieb inequalities.


\section{The abstract argument: commuting maps and  \BL-condition}\label{sec:abs}
The basic input is a measurable space $E$ and a Markov semigroup
$\Pt $ acting on functions on $E$, with generator
$L $. We do not discuss here the various questions related to the underlying domain of $L$
and its associated carré du champ operator (see below) as well as the classes of functions 
under consideration. When a given inequality on functions is stated, it is always understood relatively 
to the suitable domains of $(P_t)_{t\ge 0}$, $L$ or $\Gamma$. These are clear in all the continuous or 
discrete illustrations in this work. We refer to \cite{bakr94huts} for an introduction and further details in this 
respect and to \cite{DS} for the discrete setting.
  
The general framework of our study is the following. We introduce $m\ge 1$  measurable spaces $E_i$ and maps $T_i : E \to E_i$, $i=1, \ldots , m$.
We assume that, for each $i=1, \ldots, m$, that  the map $T_i$  \emph{commutes} to $P_t$ or $L$ in the sense that for every $g : E_i \to \rr$, $ L (g\circ T_i ) $
factors through $T_i$: 
\begin{equation}\label{eq:commut}
L(g\circ T_i )=\tilde{g}\circ T_i
\end{equation}
for some $\tilde{g}:E_i\to \R$. In other words,  $L$ (or $P_t$)  leaves invariant the algebra of functions on $E$ of the form $g\circ T_i$.
This means that $P_t$ or $L $ may be projected on $E_i$
and there exists   a Markov generator $L _i$ on $E_i$ such that
\begin{equation*}
 L (g\circ T_i ) = (L _i g) \circ T_i . 
\end{equation*}
We denote below by ${(P^i_t)}_{t\geq 0}$ the semigroup with generator $L _i$.
If follows that $P_t(g\circ T_i)=(P^i_t g)\circ T_i$. 

We aim at understanding how the  ``geometry''  or the ``combinatorics" of the $T_i$'s and of appropriate choice of constants $c_i>0$ ensure that 
$${P_t\left(\prod_{i=1}^m f_i ^{c_i} \circ T_i \right)\le \prod_{i=1}^m \big(P_t (f_i \circ T_i)\big)^{c_i}}$$
for all $f_i :E_i\to \R^+$, $i=1, \ldots, m$.
Since $(P_t(F^{1/c}))^{c} \le (P_t(F^{1/d}))^{d}$ for $c\ge d>0$, we would like to pick the largest possible constants $c_i$'s.  Also, for obvious reasons (pick all the $f_i$ but one to be identically $1$), the $c_i$'s  will belong to $(0,1]$ and the inequalities we consider can be rewritten in terms of $L^{p_i}$-norms for $p_i = \frac1{c_i}$.
 
This problem is of course reminiscent of the Brascamp-Lieb convolution inequalities described in the introduction, and it can as well be interpreted as a correlation problem. This correlation problem has  many ramifications, as we shall see.

We will, in this general framework,  be dealing with inequalities which are valid for the measures $P_t(.)(x)$, uniformly on
the point $x$.
The following main equivalence is implicit in \cite{C-L-L1},  \cite{B-CE}.

\begin{propdef} \label{prop:abstract} Let $c_i $ be non-negative reals and $T_i:E\to E_i$ maps commuting with $L$,
 for $i = 1, \ldots , m$. 
We say $\{c_i, T_i\}$ satisfy the \emph{\BL-condition} if: 
For all  functions $F_i : E \to \rr$, $i = 1, \ldots , m,$ of the form $F_i=g_i\circ T_i$, setting
 $H = \sum_{i=1}^m  c_i F_i $, it holds 
\begin{equation}\label{ineq:L}
e^{-H} L (e^H) 
         \leq   \sum_{i=1}^m c_i \, e^{-F_i} L (e^{F_i}) .
\end{equation}
Then, the following are equivalent:
\begin{itemize}
 \item For all non-negative functions $f_i : E_i \to \rr$, $i = 1, \ldots , m$,
and every $t\geq 0$,
\begin{equation}\label{ineq:Pt}
 P_t \bigg ( \prod_{i=1}^m f_i^{c_i} \circ T_i\bigg ) 
             \leq  \prod_{i=1}^m \big (P_t (f_i \circ T_i)\big ) ^{c_i} .
             \end{equation}
             
 \item  The $\{c_i, T_i\}$ satisfy the \BL-condition 
 \end{itemize}
 \end{propdef}

\begin{proof}
 Let $f_i : E_i \to \rr$, $i = 1, \ldots , m$, be bounded positive functions.
Let $t\geq 0$ and consider 
$$ \alpha (s) = P_s \bigg ( \exp \bigg ( \sum_{i=1}^m c_i \log P_{t-s} (f_i  \circ T_i)\bigg) \bigg),
      \quad 0\leq s\leq t. $$
Set $F_i = \log P_{t-s} (f_i \circ T_i)$, $i=1, \ldots , m$, and $H = \sum_{i=1}^m c_iF_i$. Direct calculations give
$$ \alpha '(s) = P_s \bigg ( L (e^H) - e^H \sum_{i=1}^m c_i \, e^{-F_i} L (e^{F_i}) \bigg ).$$
 Next,
by the commutation property~\eqref{eq:commut}, 
$F_i = \log P_{t-s} (f_i \circ T_i) $ is a function of $T_i$ so that, under \eqref{ineq:L},
$\alpha '(s) \leq 0$ and thus $\alpha (0) \geq \alpha (t)$. Hence \eqref{ineq:Pt} follows from \eqref{ineq:L}. The
converse implication is obtained by differentiating \eqref{ineq:Pt} at $t=0$. 
\end{proof}

\begin{remark}
Given maps $T_i : E \to E_i$, $ i = 1, \ldots , m$, one may not always be able to check
the \BL-condition~ \eqref{ineq:L}. It might be necessary to consider  further bijective maps
$R : E \to E$, $R_i:E_i\to E_i$ and to deal with  $\tilde{T}_i=R_i\circ T_i \circ R : E \to E_i$, $ i = 1, \ldots , m,$
(still assumed to commute with $P_t$) instead of $T_i$. This is exemplified by the paper \cite{B-C-C-T1}
where the Gaussian-extremizable cases of the Euclidean Brascamp-Lieb inequality are reduced to 
the geometric Brascamp-Lieb inequality. Actually this change of variables is also implicit in \cite{C-L-L1}
where the functions $f_i$ are evolving according to different semigroups. 
\end{remark}


It is usually of more interest to state Brascamp-Lieb type inequalities with respect
to the invariant measure $\mu$ of the semigroup $\Pt $. 
When $\Pt$ is ergodic with invariant   probability measure $\mu $, we may let
$t \to \infty$ in the local inequality \eqref{ineq:Pt}   and get inequalities of the type
\begin{equation}\label{ineq:mu}
 \int  \prod_{i=1}^m f_i^{c_i} \circ T_i \,d \mu  
                \leq  \prod_{i=1}^m  \bigg ( \int f_i \circ T_i\, d\mu  \bigg ) ^{c_i}. 
                \end{equation}
Actually this can be viewed directly by studying $\beta(t)=\int \prod_i P_t(f_i\circ T_i)^{c_i} d\mu$. Indeed with 
the notation of the above proof
$$\beta '(t) 
    = \int  e^H \left( \sum_{i=1}^m c_i \, e^{- F_i} L (e^{F_i})\right)   d\mu 
     = - \int \bigg ( L (e^H) - e^H \sum_{i=1}^m c_i \, e^{- F_i} 
                            L (e^{F_i}) \bigg ) d\mu.  $$ 
Hence integrating from $0$ to $\infty$, the \BL-condition~\eqref{ineq:L} yields \eqref{ineq:mu}. 
Note that the condition $\beta'(t)\ge 0$   may be rewritten in terms of the Dirichlet form
$ {\cal E} (f,g) := \int f (- L g) d\mu $ as
$$ \sum_{i=1}^m c_i \, {\cal E} \big ( e^{H-F_i}, e^{F_i} \big ) \leq  0. $$

\begin{remark}\label{rmk:dimension}
If $\Pt$ has an infinite invariant measure $\mu$, more hypotheses are needed to get a meaningful limit to the local 
bounds as $t\to \infty$. Assume that $\Pt$ is of dimension $n$,
and size $\kappa >0$, in the sense that   for every $\mu $-integrable function $f : E \to \rr $, at any point,
$$ \lim_{t\to \infty } t^{n/2} P_t f = \kappa \int \! f d\mu .$$
If   the semigroups ${(P_t^i)}_{t\geq 0}$ have invariant measures $\mu_i$, dimensions $n_i$ and
sizes $\kappa _i$, $i = 1, \ldots , m,$  and if in addition
$ \sum_{i=1}^m c_i n_i = n, $
we may use $P_t(f_i\circ T_i)= P_t^i(f_i)\circ T_i$ and let $t \to \infty$ in \eqref{ineq:Pt} to get
$$ \int  \prod_{i=1}^m f_i^{c_i} \circ T_i \, d \mu  
                \leq  \kappa ^{-1} 
                \prod_{i=1}^m  \bigg (\kappa _i  \int f_i \, d\mu_i  \bigg ) ^{c_i}.  $$
\end{remark}


\section{Examples of diffusion semigroups}\label{sec:diff}
This section is devoted to several examples of illustration of the preceding abstract scheme in case the 
generator $L$ satisfies a chain rule formula.
Recall that the {\em carré du champ}  of the generator $L$ is defined on some suitable algebra of functions by 
\begin{equation}\label{eq:carreduchamp}
\Gamma(f,g)=\frac12(L(fg)-fL g-gL f).
\end{equation}
For simplicity one writes $\Gamma(f)$ for $\Gamma(f,f)$. If $L$ is a diffusion generator (i.e. a linear differential
operator of order 2 without constant term), then the chain rule yields $L(e^f)=e^f\big(L f+\Gamma(f)\big)$. 
 So for $H = \sum_{i=1}^m c_iF_i$,
$$ e^{-H} L (e^H) -  \sum_{i=1}^m c_i \, e^{-F_i} L (e^{F_i}) 
     =   \Gamma (H) - \sum_{i=1}^m c_i \Gamma (F_i) . $$
Hence, we have:

\begin{fact}[\BL-condition in the diffusion case]\label{BLdiffusion}
If $L $ is a diffusion operator, then  the \BL-condition~\eqref{ineq:L} is equivalent to saying that for every functions $f_i : E_i \to \rr$, $i = 1, \ldots , m$,
\begin{equation}\label{cond:Gamma}
 \Gamma \bigg (\sum_{i=1}^m c_i \, f_i \circ T_i \bigg ) =
      \sum_{i,j=1}^m c_i c_j\,  \Gamma (f_i \circ T_i, f_j \circ T_j) 
               \leq \sum_{i=1}^m c_i \, \Gamma (f_i \circ T_i). 
           \end{equation}    
 \end{fact}
 
Depending on the structure, this condition may be expressed more intrinsically in terms
of the operators $T_i$. We investigate several instances below.

\subsection{Riemannian manifolds}
Let us assume that $E$ is a Riemannian manifold and that 
$\Gamma(f)=|\nabla f|^2$. 
This is in particular the
case if $P_t$ is the Heat equation on $E$ associated to the Riemannian Laplacian $\Delta$.
We also assume that the maps $T_i$ are
differentiable. 
Then condition \eqref{cond:Gamma} amounts to the fact that for every $x\in E$, and for all smooth functions $f_i$,
\begin{equation}\label{BLRiem1}
 \Big|\sum_{i=1}^{m} c_i \nabla (f_i\circ T_i)(x)\Big|^2 \le \sum_{i=1}^{m} c_i \big|\nabla (f_i\circ T_i)(x)\big|^2.
 \end{equation}
For each $x\in E$, we introduce the subspace of $T_xE$, the tangent space at $x$, 
\begin{equation}\label{defE_i}
\mathcal E_i(x):=\big\{ \nabla (f_i\circ T_i)(x); f_i:E_i\to \R \big\}\subset T_x E .
\end{equation}
This is the orthogonal of the kernel of $DT_i(x)$, so it is orthogonal to the tangent directions of   
the level set $\{y\in E;\; T_i(y)=T_i(x)\}$. We denote by $P_{\mathcal E_i(x)} $ the orthogonal projection on $\mathcal E_i(x)$ in the Euclidean space $T_x E$.
We can reformulate~\eqref{BLRiem1} using the following well known equivalence, which relies on  the fact that a linear map and its adjoint have the same norm:
For $\mathcal E$  a Euclidean space,  $\mathcal E_i$, $i=1,\ldots,m,$  Euclidean subspaces of $\mathcal E$  and
 $c_1,\ldots, c_m>0$ we have:
$$
  \forall v_i\in  \mathcal E_i, \quad  \Big|\sum_{i=1}^{m} c_i \, v_i \Big|^2 \le \sum_{i=1}^{m} c_i \, | v_i|^2
  \quad \Longleftrightarrow \quad  
  \forall v\in  \mathcal E, \quad \displaystyle \sum_{i=1}^{m} c_i\, \big|P_{\mathcal E_i} v \big|^2 \le | v|^2
  $$
writing $P_{\mathcal E_i}$ for the orthogonal projection onto $\mathcal E_i$.  More concisely, denoting the identity map by $\mathrm{Id}_{\mathcal E}$,
 the latter condition rewrites as an inequality between symmetric maps: $\sum_{i=1}^{m} c_i P_{\mathcal E_i} \le \mathrm{Id}_{\mathcal E}.$
 
Therefore, we see that  \BL-condition amounts here to a ``moving decomposition of the identity'' inequality  in all tangent spaces.

\begin{fact}[\BL-condition in the Riemannian case]\label{BLRiemannian}
In the setting described above,  the \BL-condition~\eqref{ineq:L} is equivalent to saying that for all $x\in E$, 
\begin{equation}\label{eq:decompx}
 \sum_{i=1}^{m} c_i P_{\mathcal E_i(x)} \le \mathrm{Id}_{T_x E}.
\end{equation}
 \end{fact}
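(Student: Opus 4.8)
The plan is to chain together the equivalences already at our disposal, so that essentially no new computation is required. First I would invoke Fact~\ref{BLdiffusion}: since $L$ is a diffusion operator, the \BL-condition~\eqref{ineq:L} is equivalent to the carr\'e-du-champ inequality~\eqref{cond:Gamma}. Then, using the hypothesis $\Gamma(f)=|\nabla f|^2$ and the resulting bilinear identity $\Gamma(f_i\circ T_i,f_j\circ T_j)(x)=\langle\nabla(f_i\circ T_i)(x),\nabla(f_j\circ T_j)(x)\rangle$, I would observe that~\eqref{cond:Gamma} is precisely the pointwise inequality~\eqref{BLRiem1}, demanded at every $x\in E$ and for all smooth $f_i$ (this reduction is already recorded just before the definition of $\mathcal E_i(x)$). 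This collapses the whole statement to a pointwise, purely linear-algebraic question: for each fixed $x$, is~\eqref{BLRiem1}, quantified over all smooth families $(f_i)$, equivalent to the decomposition inequality~\eqref{eq:decompx}?

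Next, for a fixed $x$ I would set $v_i=\nabla(f_i\circ T_i)(x)$ and read off the defining property~\eqref{defE_i} of $\mathcal E_i(x)$: as $f_i$ ranges over all smooth functions on $E_i$, the vector $v_i$ ranges over the whole subspace $\mathcal E_i(x)$. Because $f_1,\dots,f_m$ may be chosen independently of one another, the tuple $(v_1,\dots,v_m)$ sweeps out the entire product $\mathcal E_1(x)\times\cdots\times\mathcal E_m(x)$. Hence~\eqref{BLRiem1}, quantified over all admissible $(f_i)$, is literally the statement
$$\Big|\sum_{i=1}^m c_i v_i\Big|^2\le\sum_{i=1}^m c_i\,|v_i|^2 \qquad\text{for all } v_i\in\mathcal E_i(x).$$

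Finally I would apply the elementary Euclidean equivalence recalled immediately before the statement, with $\mathcal E=T_xE$ and $\mathcal E_i=\mathcal E_i(x)$, to convert this into the operator inequality $\sum_{i=1}^m c_i P_{\mathcal E_i(x)}\le\mathrm{Id}_{T_xE}$; letting $x$ vary over $E$ then yields the Fact. The one step deserving genuine attention --- and the only place where the geometry of the $T_i$ really enters --- is the realizability claim, namely that every $v_i\in\mathcal E_i(x)$ arises as $\nabla(f_i\circ T_i)(x)$ for some smooth $f_i$ and that these choices are unconstrained across distinct $i$. This is immediate from the definition~\eqref{defE_i}, which describes $\mathcal E_i(x)$ as exactly this set of gradients (and identifies it as the orthogonal complement of $\ker DT_i(x)$, hence a genuine subspace), together with the independence of the functions $f_i$. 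Everything else is bookkeeping against results already in hand.
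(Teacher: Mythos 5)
Your argument is correct and coincides with the paper's own derivation, which obtains Fact~\ref{BLRiemannian} in the paragraphs preceding its statement via exactly the same chain: Fact~\ref{BLdiffusion} reduces the \BL-condition to~\eqref{cond:Gamma}, the hypothesis $\Gamma(f)=|\nabla f|^2$ turns this into the pointwise inequality~\eqref{BLRiem1}, the gradients $\nabla(f_i\circ T_i)(x)$ sweep out all of $\mathcal E_i(x)$ by the very definition~\eqref{defE_i} (identified there as $(\ker DT_i(x))^{\bot}$, hence a subspace), and the stated Euclidean duality converts the resulting vector inequality into $\sum_{i=1}^{m} c_i P_{\mathcal E_i(x)} \le \mathrm{Id}_{T_x E}$. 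Your explicit attention to the realizability of arbitrary independent tuples $(v_1,\dots,v_m)\in\mathcal E_1(x)\times\cdots\times\mathcal E_m(x)$ makes precise a step the paper treats as immediate, but it is the same proof.
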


Next, we present instances of such decompositions in the case of model spaces.
\medskip

{\it Geometric Brascamp-Lieb inequality in Euclidean space}.
In $\rr^n$, let, for $i = 1, \ldots , m$,  $E_i$, be vector subspaces of dimension $n_i\ge 1$ and let $c_i \ge 0$,   such that 
$$ \sum_{i=1}^m c_i P_{E_i} =  {\rm Id}_{\rr^n} $$
We take of course $T_i:\mathbb R^n \to E_i$ such that $T_i (x) = P_{E_i} x$,
$x \in \rr^n$, $i = 1, \ldots , m$.

If $B$ is a linear map, $\nabla (f\circ B)(x)= {^t}\!B\nabla f(Bx)$ and
$\Delta(f\circ B)(x)=\mbox{Tr}\big( {^t} B \mbox{Hess}f(Bx) B\big)$. It is then clear that 
the generator $L=\Delta-x\cdot \nabla$  of the Ornstein-Uhlenbeck semigroup commutes with the $T_i's$.
Also for all $x\in \R^n$, the spaces $\mathcal E_i(x)$ are simply $E_i$. Hence ~\eqref{eq:decompx}
is guaranteed by the decomposition of the identity induced by the $E_i$'s.
Thus, we get a Brascamp-Lieb inequality for the standard Gaussian measure, which is ergodic for the  Ornstein-Uhlenbeck semigroup:
\begin{eqnarray*}
 \int_{\R^n} \prod_{i=1}^m f_i(P_{E_i}x)^{c_i} e^{-|x|^2/2} \frac{dx}{(2\pi)^{n/2}} 
& \le& \prod_{i=1}^{m} \left( \int_{\R^n} f_i(P_{E_i}x) e^{-|x|^2/2} \frac{dx}{(2\pi)^{n/2}} \right)^{c_i}\\
  &=&  \prod_{i=1}^{m} \left( \int_{E_i} f_i(y) e^{-|y|^2/2} \frac{dy}{(2\pi)^{n_i/2}} \right)^{c_i}.
\end{eqnarray*}
Note that the decomposition of identity rewrites as $\sum_{i=1}^{m} c_i|P_{E_i}x|^2=|x|^2$, hence setting
$g_i(y)=f_i(y)\exp(-|y|^2/2)$ and using the condition $n=\sum c_i n_i$ (take traces in the decomposition
of the identity), we obtain the Euclidean inequality
$$ \int_{\R^n} \prod_{i=1}^m g_i(P_{E_i}x)^{c_i} dx
 \le  \prod_{i=1}^{m} \left( \int_{E_i} g_i(y) \, dy\right)^{c_i}.$$
Alternatively we could have used the Heat semigroup (with generator $\Delta$) to get a local inequality and pass to the limit using
the dimension of this semigroup, as explained in the Remark~\ref{rmk:dimension}. 

\medskip     
           
{\it Geometric Brascamp-Lieb inequality on the sphere}.
The first inequality of this type was established by Carlen, Lieb and Loss \cite{C-L-L1} for coordinate functions 
on the sphere. It involves an unexpected exponent 2. A natural extension in the spirit of the latter Euclidean inequality was given
in \cite{B-CE-M}. It reads as follows:
If $x \in \Sp^{n-1} \subset \rr^n$ (the standard $(n-1)$-sphere), set as before
$T_i (x) = P_{E_i}(x) $, $i = 1, \ldots , m$, where $E_i\subset \R^n$ are subspaces for which we have
$$ \sum_{i=1}^{m}c_i P_{E_i}\le \mbox{Id}_{\R^n}.$$
Then, whenever $f_i$ are non-negative measurable functions on the sphere, such that $f_i$ depends 
only on $E_i$ (that is $f_i(x)=g_i(P_{E_i}(x)$),  for the uniform probability measure $\sigma$ on $\Sp^{n-1}$ we have,
$$ \int_{\Sp^{n-1}} \prod_{i=1}^{m} f_i^{c_i/2} d\sigma \le \prod_{i=1}^{m} \left(\int_{\Sp^{n-1}} f_i \, d\sigma\right)^{c_i/2}.$$

 It is easy to see that
the Laplacian on $\Sp^{n-1}$ commutes to the operators $T_i$. The strategy in \cite{B-CE-M} is to derive 
decompositions of the identity in all tangent hyperplanes to the sphere, thus fulfilling Condition~\eqref{eq:decompx}.
Another approach based on analysis on the orthogonal group will be given next.

\medskip

{\it Hyperbolic space}. It is natural to ask for an hyperbolic analogue of the previous statement.
Let us explain, in two dimensions, why the method does not give any interesting correlation inequality.
The natural functionals $T_i$ to consider are the Busemann functions (which basically are the coordinate
in the direction of a point at infinity), they commute with the Laplace operator. In the disk model,
choose $b_1,\ldots,b_m$ on the unit circle and let $T_i$ be the corresponding Busemann functions.
At a point $x$ in the disk the directions $\mathcal E_i(x)$ are simply the lines spanned by the gradients
of the $T_i's$ (the tangent to the geodesic passing through $x$ and going to $b_i$).
When $x$ tends to a point at infinity $b$  which is not one of the $b_i'$s, it is clear that the lines
$\mathcal E_i(x)$ become asymptotically parallel to the line $\R b$. Hence if a decomposition of the identity exists
in all tangent planes we get that $\sum c_i\le 1$. But in this case the decomposition \eqref{eq:decompx} is trivial
since $P_{\mathcal E_i(x)}\le \mathrm{Id}$, and the inequality that we get is nothing else than
Hölder's inequality.


\subsection{Riemannian Lie groups}
In the case of Lie groups (and their quotients), the geometric structure required to have Brascamp-Lieb type inequalities
is very clear and elegant.

The algebraic structure of the problem appears clearly  when functions depending only on some variables are seen as functions invariant under the (right) action of subgroups of isometries. For instance, a function $f(x)$ on $\R^n$ is a function of $x_1$ if and only  $f$ is invariant under all translation leaving $e_1=(1, 0, \ldots, 0)$ invariant.  Note also that a function $f(x)$ on the sphere $S^{n-1}\subset \R^n$ is a function of $x_1$ if and only if $f$ is invariant under all rotations leaving $e_1$ invariant. In this section, we shall extensively use this point of view in the case of compact Riemannian Lie group.

Let $G$ be a connected compact Riemannian Lie group with unit element denoted by $e$. Let $\mathcal G= T_e M$ be associated Lie algebra ;  by assumption, $\mathcal G$ is a Euclidean space.
Let $\mu$ be the normalized bi-invariant Haar measure on $G$. Here we will work with the Laplace-Beltrami operator $\Delta$ as  Markov generator, for which we indeed have that
$$\Gamma(f)=|\nabla f |^2 ,$$
as required in the previous section.

Let $G_i$ be a connected Lie  subgroup of $G$, with Lie algebra $\mathcal G_i\subset \mathcal G$.
A function $f:G\to \R$ is said to be $G_i$-right-invariant if 
$$ f(xg)=f(x),\quad \forall g\in G_i,\quad \forall x\in G.$$
Equivalently $f$ is of the form $g\circ T_i$ where $T_i:G\to G/G_i$ is the canonical projection
onto the right-quotient, defined by $T_i(x)=xG_i$.  In other words, using notation~\eqref{defE_i}, we are interested in the case where, for $x\in G$, 
$$\mathcal E_i (x) =\{\nabla f (x)\; ; \ \textrm{ $f:G\to \R$ is $G_i$-right-invariant} \}.$$
If $f$ is $G_i$-right-invariant, then  for all $v\in \mathcal G_i$ and all $t\in \R$,
$$ f\big(x\exp(tv)\big)=f(x),\quad \forall x\in G.$$
If $f$ is differentiable, we get that 
$$ 0=\frac{d}{dt}\Big|_{t=0} f\big(x\exp(tv)\big)= \langle \nabla f(x), d(L_x)_{e}v \rangle,\quad \forall v\in \mathcal G_i,$$
where $L_x:G\to G$ is the  left-multiplication by $x$. Since $L_x$  is an isometry of $G$, its differential at $e$, 
$ d(L_x)_{e}$, is an isometry between the Euclidean spaces $T_{e}G =  \mathcal G$ and $T_x G$.
In particular,  we will exploit the invariance property in the following form
\begin{equation}\label{invLie}
(d(L_x)_{e})^{-1}\nabla f(x)\in \mathcal G_i^\bot,\quad \forall x\in G.
\end{equation}
Roughly speaking, a $G_i$-right-invariant function $f$ ``depends" only on $\mathcal G_i^\perp$ in the sense that the gradient $\nabla f (x)$ is in the direction $\mathcal G_i^\perp$ transported on $T_ x M$: 
$$\mathcal E_i(x) = d(L_x)_{e}\, \mathcal E_i ,$$
setting   $\mathcal E_i:= \mathcal G_i^{\bot}$.
With this formalism, the condition to have a Brascamp-Lieb inequality boils down to the existence of a decomposition of the
identity in the Lie algebra:

\begin{theorem}\label{th:lie}
   Let $G$ be a connected compact Riemannian Lie group. Let $(G_i)_{i=1}^m$ be connected Lie subgroups and let
  $\mathcal E_i:=\mathcal G_i^\bot$be  the orthogonal complements in the Lie algebra $\mathcal G$  of $G$ of their Lie algebras $(\mathcal G_i)_{i=1}^m$. 
 Assume that for given $d_1,\ldots, d_m>0$  the following inequality holds between symmetric linear maps of $\mathcal G$:
  \begin{equation}\label{eq:condLie}
   \sum_{i=1}^d d_i P_{\mathcal E_i}\le \mathrm{Id}_{\mathcal G}.
   \end{equation}
Then  the \BL-condition~\eqref{ineq:L} is satisfied. In particular, if for $i=1,\ldots, m$, 
  $f_i:G\to \R^+$ is $G_i$-right-invariant, it holds
\begin{equation}\label{BLineqLie}
 \int_G \prod_{i=1}^{m} f_i^{d_i}d\mu \le \prod_{i=1}^{m} \left(\int_G f_i \,d \mu\right)^{d_i}.
 \end{equation}
\end{theorem}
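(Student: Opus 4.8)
The plan is to reduce the global inequality~\eqref{BLineqLie} to the abstract machinery of Proposition~\ref{prop:abstract} together with Fact~\ref{BLRiemannian}, by verifying the pointwise decomposition~\eqref{eq:decompx} in every tangent space from the algebraic hypothesis~\eqref{eq:condLie} in the single Euclidean space $\mathcal G$. The key observation to exploit is the identity $\mathcal E_i(x)=d(L_x)_e\,\mathcal E_i$ established just before the theorem: the tangent-space data at an arbitrary point $x$ is obtained from the fixed Lie-algebra data by the linear isometry $d(L_x)_e$.

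First I would record that the Laplace--Beltrami operator $\Delta$ commutes with each $T_i$, i.e.\ that the algebra of $G_i$-right-invariant functions is $\Delta$-invariant; this is where I expect the only genuinely structural input to sit, and it is exactly the kind of fact the authors flag as ``clear in all the illustrations.'' The natural justification is that $\Delta$ is bi-invariant on a compact Riemannian Lie group, so it commutes with right translations by elements of $G_i$, and hence preserves the subalgebra of functions constant on right $G_i$-cosets; equivalently $\Delta$ descends to the quotient $G/G_i$. Granting this, the maps $T_i$ commute with $L=\Delta$ in the sense of~\eqref{eq:commut}, so Proposition~\ref{prop:abstract} and Fact~\ref{BLRiemannian} both apply.

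The main computational step is the transfer of~\eqref{eq:condLie} to~\eqref{eq:decompx}. I would argue that conjugating a projection by an isometry yields the projection onto the image subspace, namely $P_{\mathcal E_i(x)}=d(L_x)_e\,P_{\mathcal E_i}\,(d(L_x)_e)^{-1}$, because $d(L_x)_e:\mathcal G\to T_xG$ is an isometry carrying $\mathcal E_i$ onto $\mathcal E_i(x)$. Summing against the coefficients $d_i$ and using that $d(L_x)_e$ intertwines $\mathrm{Id}_{\mathcal G}$ with $\mathrm{Id}_{T_xG}$, the hypothesis $\sum_i d_i P_{\mathcal E_i}\le\mathrm{Id}_{\mathcal G}$ conjugates into $\sum_i d_i P_{\mathcal E_i(x)}\le\mathrm{Id}_{T_xG}$ for every $x$, which is precisely~\eqref{eq:decompx}. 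By Fact~\ref{BLRiemannian} the \BL-condition~\eqref{ineq:L} holds.

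Finally I would deduce the integral inequality~\eqref{BLineqLie}. Since the \BL-condition is satisfied, Proposition~\ref{prop:abstract} gives the local semigroup inequality~\eqref{ineq:Pt} for all $t\ge 0$; because $G$ is compact the heat semigroup is ergodic with invariant probability measure $\mu$ (the normalized Haar measure), so letting $t\to\infty$ in~\eqref{ineq:Pt} produces~\eqref{ineq:mu}, which for the right-invariant functions $f_i=g_i\circ T_i$ is exactly~\eqref{BLineqLie}. The one point demanding care is the commutation claim for $\Delta$; the rest is a clean conjugation argument and an appeal to the already-established abstract equivalence and its ergodic limit.
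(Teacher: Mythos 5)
Your proposal is correct and follows essentially the same route as the paper: the paper verifies the \BL-condition in the form~\eqref{BLRiem1} by pulling gradients back to $\mathcal G$ via the isometry $d(L_{x^{-1}})_e$, and explicitly remarks at the end of its proof that one could equivalently push the decomposition~\eqref{eq:condLie} forward by $dL_x$ to obtain~\eqref{eq:decompx} on $T_xG$ --- which is exactly your conjugation argument, the two formulations being equivalent by the duality stated just before Fact~\ref{BLRiemannian}. The only nuance worth noting is that the invariance computation~\eqref{invLie} rigorously yields the inclusion $\mathcal E_i(x)\subset d(L_x)_e\,\mathcal E_i$ rather than the equality you assert, but this suffices since $P_V\le P_W$ for subspaces $V\subset W$.
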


\begin{proof}
  We consider the Heat kernel on $G$. The Laplace Beltrami operator commutes with right multiplication by the 
elements of the group so that the commutation relation is verified, in particular $P_t f_i$ is again $G_i$-invariant.
Next let us check condition \eqref{ineq:L} in the form~\eqref{BLRiem1} put forward in the beginning of the Riemannian case. If for  $i\le n$, $h_i$
is a differentiable $G_i$-invariant function then, then, rewriting~\eqref{invLie} as     
$$d(L_{x^{-1}})_e \nabla h_i (x) \in \mathcal E_i.$$
we get $P_{\mathcal E_i}d(L_{x^{-1}})_e \,\nabla h_i (x) = d(L_{x^{-1}})_e\,  \nabla h_i (x)$.
Using the fact that  $d(L_{x^{-1}})_e$ is an isometry between $T_x M$ and  $\mathcal G$ and the decomposition of the identity in $\mathcal G$, we see that
\begin{eqnarray*}
\Big\|\sum_i d_i\, \nabla h_i (x) \Big\|^2 & = & 
\Big\|(dL_{x^{-1}})\Big(\sum_i d_i\, \nabla h_i (x)\Big) \Big\|^2 
= \Big\|\sum_i d_i\, (dL_{x^{-1}})\nabla h_i (x) \Big\|^2 \\
&\le & \sum_i d_i\, \big\|(dL_{x^{-1}})\nabla h_i (x) \big\|^2 
 =  \sum_i d_i\, \big\|\nabla h_i (x) \big\|^2 .
\end{eqnarray*} The result follows. Equivalently, we could have said that the isometry $  dL_{x} $ pushes forward the decomposition~\eqref{eq:condLie} from $\mathcal G = T_{e} G$ to the decomposition~\eqref{eq:decompx} on $T_x G$.
\end{proof}

\subsubsection{Calculations in $SO(n)$}
We consider subgroups related to the natural action of $SO(n)$ on $\R^n$ and study the relationship between decompositions of the identity 
of $\R^n$ and the ones induced on $\mathcal A_n=so(n)$, the set of antisymmetric $n\times n$ matrices which is the Lie algebra of $SO(n)$.
The Euclidean structure on $\mathcal A_n$ is  given by the Hilbert-Schmidt norm and the corresponding scalar product $\langle A,B\rangle =\mathrm{Tr}({^t}AB)= -\mathrm{Tr}(AB)$.

We will consider as before functions on $SO(n)$ which are right-invariant with respect to subgroups. There exists two natural subgroups associated to a subspace $E\subset \R^n$: $\mathrm{Fix}(E)$ and $\mathrm{Stab}(E)$.

\begin{lemma}\label{lem:fix}
   Let $E$ be a vector subspace of $\R^n$. Consider the group
 $$H=\mathrm{Fix}(E):=\{ U\in SO(n);\; U_{|E}=\mathrm{Id}\}$$
 and let $\mathcal H$ be its Lie algebra. We have $\mathcal H = \{A \in \mathcal A_n  ; \ A_{|E} = 0\}$ and 
if $P_\mathcal E:\mathcal A_n\to \mathcal A_n$ denotes the orthogonal projection onto $\mathcal E:=\mathcal H^\bot$, we have that
$$ \| P_{\mathcal E} (A)\|^2= 2\| P_E A\|^2-\|P_EAP_E\|^2,\quad \forall A\in\mathcal A_n.$$
Moreover a function $f:G\to \R$ is $H$-right-invariant means that $f(U)$ is actually a function of $U_{|E}$. 
\end{lemma}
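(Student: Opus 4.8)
The plan is to fix the orthogonal splitting $\R^n = E \oplus E^\perp$ and carry out every computation blockwise with respect to it. Writing each $A \in \mathcal A_n$ as
\[
A = \begin{pmatrix} A_{11} & A_{12} \\ A_{21} & A_{22}\end{pmatrix},
\]
with $A_{11}:E\to E$ and $A_{22}:E^\perp\to E^\perp$, antisymmetry of $A$ forces $A_{11}$ and $A_{22}$ to be antisymmetric and $A_{21}=-{}^tA_{12}$. All three objects in the statement --- $\mathcal H$, $\mathcal E$, and $P_E$ --- are diagonal or block-structured in this decomposition, so the whole lemma reduces to reading off blocks.

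First I would identify $\mathcal H$. Differentiating $\exp(tA)_{|E}=\mathrm{Id}$ at $t=0$ along a one-parameter subgroup of $H$ gives $A_{|E}=0$, i.e. $A_{11}=0$ and $A_{21}=0$; antisymmetry then kills $A_{12}$ as well, so only the block $A_{22}$ survives. Conversely, a matrix of the form $\bigl(\begin{smallmatrix} 0 & 0 \\ 0 & A_{22}\end{smallmatrix}\bigr)$ exponentiates to $\bigl(\begin{smallmatrix} I & 0 \\ 0 & \exp(tA_{22})\end{smallmatrix}\bigr)$, which fixes $E$ pointwise. This yields $\mathcal H = \{A\in\mathcal A_n: A_{|E}=0\}$, realized as the antisymmetric matrices supported on the lower-right block. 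To compute $\mathcal E=\mathcal H^\bot$, I would pair a general $A$ against $\bigl(\begin{smallmatrix} 0 & 0 \\ 0 & C\end{smallmatrix}\bigr)$ with $C$ antisymmetric: since $\langle A,B\rangle=\mathrm{Tr}({}^tAB)$ gives $\mathrm{Tr}({}^tA_{22}C)$, nondegeneracy of the Hilbert--Schmidt pairing on antisymmetric matrices shows $\mathcal H$-orthogonality is equivalent to $A_{22}=0$. Hence $P_{\mathcal E}$ simply erases the $A_{22}$ block.

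The identity is then a bookkeeping of Hilbert--Schmidt norms. Writing $P_E=\bigl(\begin{smallmatrix} I & 0 \\ 0 & 0\end{smallmatrix}\bigr)$ one reads off $P_EA=\bigl(\begin{smallmatrix} A_{11} & A_{12} \\ 0 & 0\end{smallmatrix}\bigr)$ and $P_EAP_E=\bigl(\begin{smallmatrix} A_{11} & 0 \\ 0 & 0\end{smallmatrix}\bigr)$, so that $\|P_EA\|^2=\|A_{11}\|^2+\|A_{12}\|^2$ and $\|P_EAP_E\|^2=\|A_{11}\|^2$, while $\|P_{\mathcal E}(A)\|^2=\|A_{11}\|^2+\|A_{12}\|^2+\|A_{21}\|^2=\|A_{11}\|^2+2\|A_{12}\|^2$ because $\|A_{21}\|=\|A_{12}\|$. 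Combining these gives $\|P_{\mathcal E}(A)\|^2=2\|P_EA\|^2-\|P_EAP_E\|^2$. For the last assertion I would observe that $f$ is $H$-right-invariant iff it is constant on each coset $UH$, and $UH=VH$ iff $V^{-1}U\in\mathrm{Fix}(E)$ iff $U_{|E}=V_{|E}$; thus the cosets are exactly the level sets of $U\mapsto U_{|E}$, and $f$ factors through this map.

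The only genuinely delicate point, and the step I would double-check most carefully, is the constant in the identity. It hinges on recognizing that $P_{\mathcal E}$ retains \emph{both} off-diagonal blocks $A_{12},A_{21}$ (of equal norm), whereas $P_EA$ retains only $A_{12}$: the doubling of $\|P_EA\|^2$ precisely compensates for the missing $A_{21}$, and subtracting $\|P_EAP_E\|^2$ removes the over-counted diagonal block $A_{11}$. Everything else is routine once the block decomposition is set up.
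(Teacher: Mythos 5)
Your proof is correct and takes essentially the same route as the paper: the paper identifies the orthogonal projection onto $\mathcal H$ as $A\mapsto P_{E^\bot}AP_{E^\bot}$, deduces $P_{\mathcal E}(A)=P_EA+AP_E-P_EAP_E$, and obtains $\|P_{\mathcal E}(A)\|^2=2\|P_EA\|^2-\|P_EAP_E\|^2$ via trace identities, which is precisely your blockwise computation relative to $E\oplus E^\bot$ written in projection-operator notation (the $A_{22}$ block being $P_{E^\bot}AP_{E^\bot}$), and the coset argument for the last assertion is identical. Your explicit verification that $\mathcal H=\{A\in\mathcal A_n;\ A_{|E}=0\}$ via one-parameter subgroups is a harmless elaboration of what the paper declares obvious.
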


\begin{proof}
The equality $\mathcal H=\{A\in \mathcal A_n;\; A_{|E}=0\}$ is obvious. Let us check that the orthogonal projection of $A\in\mathcal A_n$
onto $\mathcal H$ is $P_{E^\bot}AP_{E^\bot}$. Indeed the latter is clearly antisymmetric and vanishes on vectors of $E$, so it 
belongs to $\mathcal H$. It remains to check the orthogonality condition: if $B\in \mathcal H$,
\begin{eqnarray*}
   -\langle B, A-P_{E^\bot}AP_{E^\bot}\rangle &=& \mathrm{Tr}\Big(B\big(A-P_{E^\bot}AP_{E^\bot} \big)\Big) \\
    &=& \mathrm{Tr}(BA)-\mathrm{Tr}(BP_{E^\bot}AP_{E^\bot}).
\end{eqnarray*}
Since $B$ vanishes on $E$, $B=B(P_E+P_{E^\bot})=BP_{E^\bot}$ and taking adjoints $P_{E^\bot}B=B$. It is then clear that 
$\mathrm{Tr}(BP_{E^\bot}AP_{E^\bot})= \mathrm{Tr}(BA)$. The orthogonality follows.

Since $\mathcal E=\mathcal H^\bot$ and denoting for shortness $P$ instead of $P_E$, and $I$ instead of $\mathrm{Id}_{\R^n}$, we have
$$ P_{\mathcal E}(A)=A-P_{E^\bot}AP_{E^\bot}=A-(I-P)A(I-P)= PA+AP-PAP.$$
Eventually, since $P_\mathcal E$ is a self-adjoint involution
\begin{eqnarray*}
   \|P_{\mathcal E}(A)\|^2&=& \langle A,P_{\mathcal E}A\rangle =-\mathrm{Tr}(A(PA+AP-PAP)) \\
  &=&-2 \mathrm{Tr}(A^2P)+\mathrm{Tr}(APAP)=2\|PA\|^2-\|PAP\|^2.
\end{eqnarray*}
The statement on $H$-right-invariant functions is easy. Such a function can be viewed as a function
on $SO(n)/H\approx SO(n)/SO(E^\bot) $ which can be identified to the Stieffel manifold of orthogonal frames of size
$\mathrm{dim}(E)$ in $\R^n$. More explicitly, $U_1 H=U_2H$ is equivalent to $U_2^{-1}U_1\in H$, that is 
for all $x\in E$, $U_1(x)=U_2(x)$. Hence the restriction of $U$ to $E$ characterizes the class of $U$ in the quotient.
\end{proof}

\begin{lemma}\label{lem:stab}
   Let $E$ be a vector subspace of $\R^n$. Consider the group
 $$H=\mathrm{Stab}(E):=\{ U\in SO(n);\; U(E)\subset E\}$$ and let $\mathcal H$ be its Lie algebra.
  If $P_\mathcal E:\mathcal A_n\to \mathcal A_n$ denotes the orthogonal projection onto $\mathcal H^\bot$, it holds
$$ \| P_{\mathcal E} (A)\|^2= 2\| P_E A\|^2-2\|P_EAP_E\|^2, \quad \forall A\in\mathcal A_n.$$
Moreover a function $f:G\to \R$ is $H$-right-invariant means that $f(U)$ is actually a function of $U(E)$. 
\end{lemma}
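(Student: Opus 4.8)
The plan is to follow the same route as in the proof of Lemma~\ref{lem:fix}, identifying first the Lie algebra $\mathcal H$ of $H=\mathrm{Stab}(E)$, then computing the orthogonal projection onto it in $(\mathcal A_n,\langle\cdot,\cdot\rangle)$, and finally evaluating the resulting Hilbert--Schmidt norm; the characterization of right-invariant functions is treated separately at the end. First I would determine $\mathcal H$. Differentiating $e^{tA}(E)\subset E$ at $t=0$ shows that $A\in\mathcal H$ forces $A(E)\subset E$, and conversely $A(E)\subset E$ integrates back into $H$. Moreover, antisymmetry upgrades this: for $x\in E$, $y\in E^\bot$ one has $\langle Ay,x\rangle=-\langle y,Ax\rangle=0$, so $A(E)\subset E$ automatically yields $A(E^\bot)\subset E^\bot$. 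Writing $P=P_E$ and $I=\mathrm{Id}_{\R^n}$, this means exactly $P_{E^\bot}AP=0$ (equivalently, by antisymmetry, $PAP_{E^\bot}=0$), i.e. $\mathcal H$ consists of the ``block-diagonal'' antisymmetric matrices $A=PAP+P_{E^\bot}AP_{E^\bot}$.

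Next I would compute $P_{\mathcal E}=\mathrm{Id}-\Pi$, where $\Pi$ is the orthogonal projection onto $\mathcal H$. The claim is that $\Pi(A)=PAP+P_{E^\bot}AP_{E^\bot}$, the block-diagonal part of $A$. This map is plainly idempotent and takes values in $\mathcal H$; the only thing to verify is orthogonality, namely $\langle A-\Pi(A),B\rangle=0$ for every $B\in\mathcal H$. Writing $A-\Pi(A)=PAP_{E^\bot}+P_{E^\bot}AP$ and $B=PBP+P_{E^\bot}BP_{E^\bot}$, the trace $\mathrm{Tr}\big((A-\Pi(A))B\big)$ vanishes term by term using $PP_{E^\bot}=0$ and cyclicity. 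Hence $P_{\mathcal E}(A)=A-\Pi(A)=PA+AP-2PAP$, the ``off-diagonal'' part. Note the contrast with Lemma~\ref{lem:fix}, where the projection onto $\mathcal H$ kept only one diagonal block and produced $PA+AP-PAP$; it is precisely the presence here of \emph{both} diagonal blocks that will double one of the terms.

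The norm is then computed as $\|P_{\mathcal E}(A)\|^2=\langle A,P_{\mathcal E}(A)\rangle=-\mathrm{Tr}\big(A(PA+AP-2PAP)\big)$, since $P_{\mathcal E}$ is a self-adjoint involution. Using $\mathrm{Tr}(APA)=\mathrm{Tr}(A^2P)$ together with the two identities $\|PA\|^2=-\mathrm{Tr}(A^2P)$ and $\|PAP\|^2=-\mathrm{Tr}(APAP)$ (both following from $A^t=-A$, $P^2=P$ and cyclicity, exactly as in Lemma~\ref{lem:fix}), this collapses to $2\|PA\|^2-2\|PAP\|^2$, as claimed. The main point to get right---and the only genuine difference from the Fix case---is the correct block structure of $\Pi$, which is what produces the coefficient $2$ rather than $1$ in front of $\|PAP\|^2$; this is where I would be most careful.

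Finally, for the invariance statement, $f$ is $H$-right-invariant iff it is constant on cosets $UH$, and $U_1H=U_2H\iff U_2^{-1}U_1\in\mathrm{Stab}(E)\iff U_2^{-1}U_1(E)\subset E$. Since $U_2^{-1}U_1$ is orthogonal and $E$ is finite-dimensional, this inclusion is an equality, i.e. $U_1(E)=U_2(E)$. Thus the coset of $U$ is determined exactly by the subspace $U(E)$, so that $H$-right-invariance is equivalent to $f(U)$ being a function of $U(E)$, and $SO(n)/H$ is identified with the Grassmannian of $\dim(E)$-dimensional subspaces of $\R^n$.
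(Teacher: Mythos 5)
Your proof is correct and follows essentially the same route as the paper: identify $\mathcal H$ as the block-diagonal antisymmetric matrices, show that the orthogonal projection onto $\mathcal H$ is $A \mapsto P_E A P_E + P_{E^\bot} A P_{E^\bot}$, deduce $P_{\mathcal E}(A) = PA + AP - 2PAP$, and conclude by the same trace computation, with the same coset argument for the invariance statement. Your only additions are small details the paper leaves implicit (antisymmetry forcing $A(E^\bot)\subset E^\bot$ once $A(E)\subset E$, and orthogonality of $U$ upgrading $U(E)\subset E$ to equality), which are correct and harmless.
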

\begin{proof}
   The argument is very similar to the one of the previous lemma. 
First note that
\begin{eqnarray*}
H&=&\{U\in SO(n);\; U(E)=E\}=\{U\in SO(n);\; U(E)\subset E\mbox{ and } U(E^\bot)\subset E^\bot\}.
\end{eqnarray*}
For a $H$ right-invariant function $f$, $f(U)$ depends only on $UH$. Since $U_1H=U_2H$ is equivalent to $U_1(E)=U_2(E)$,
the quantity $f(U)$ depends on $U(E)$. In other words $f$ factors through
the Grassmann manifold of spaces of dimension $\mathrm{dim}(E)$ in $\R^n$.

One easily checks that $\mathcal H=\{A\in \mathcal A_n;\; A(E)\subset E\mbox{ and } A(E^\bot)\subset E^\bot\}$.
The orthogonal projection for $A\in \mathcal A_n$ onto $\mathcal H$ is $P_E AP_E+P_{E^\bot}A P_{E^\bot}$.
Indeed this is clearly an antisymmetric map for which $E$ and $E^\bot$ are stable. Moreover for $B\in \mathcal H$,
it is clear that $B=P_EBP_E+P_{E^\bot}BP_{E^\bot}$. Hence 
$$
   -\langle B, A-P_E AP_E+P_{E^\bot}A P_{E^\bot}\rangle = \mathrm{Tr}(BA)-\mathrm{Tr}(BP_EAP_E)
     -\mathrm{Tr}(BP_{E^\bot}AP_{E^\bot})=0.
$$
Eventually, since $\mathcal E=\mathcal H^\bot$, $P_{\mathcal E}(A)=A-P_EAP_E+P_{E^\bot}AP_{E^\bot}$. So calculating
as in the previous lemma,  we have $P_{\mathcal E}(A)=PA+AP-2PAP$ and
\begin{eqnarray*}
 \|P_{\mathcal E}(A)\|^2&=& \langle A,P_{\mathcal E}A\rangle =-\mathrm{Tr}(A(PA+AP-2PAP)) \\
  &=&-2 \mathrm{Tr}(A^2P)+2\mathrm{Tr}(APAP)=2\|PA\|^2-2\|PAP\|^2.
\end{eqnarray*}
\end{proof}

The connection between decompositions of identity of $\R^n$ and of $\mathcal A_n$ is explained next.
\begin{proposition}\label{genSOn}
   For $i=1,\ldots, m$, let $c_i>0$, $E_i$ be a vector subspace of $\R^n$ and let $G_i$ be either
   $\mathrm{Fix}(E_i)$ or $\mathrm{Stab}(E_i)$. Denote by $\mathcal E_i=\mathcal G_i^\perp$ the orthogonal of $\mathcal G_i$ (the Lie 
   algebra of $G_i$) in $\mathcal A_n$. We have
$$ \sum_{i=1}^{m} c_i P_{E_i}\le \mathrm{Id}_{\R^n} \ \Longrightarrow \ 
 \sum_{i=1}^{m}\frac{ c_i}{2} P_{\mathcal E_i}\le \mathrm{Id}_{\mathcal A_n}.$$
 As a consequence, if $\sum_{i=1}^{m} c_i P_{E_i}\le \mathrm{Id}_{\R^n}$ then inequality~\eqref{BLineqLie} holds on $G=SO(n)$ (equipped with its uniform probability measure $\mu$) whenever each $f_i(U)$ is a function of $U(E_i)$ or of $U_{|E_i}$, $i=1, \ldots, m$.
\end{proposition}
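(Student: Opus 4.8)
The plan is to treat the two assertions in turn: first the operator implication between decompositions of the identity, which is the real content, and then deduce the integral inequality~\eqref{BLineqLie} by invoking Theorem~\ref{th:lie}. For the operator inequality I would work with quadratic forms. Since each $P_{\mathcal E_i}$ is a self-adjoint projection on $\mathcal A_n$, the desired bound $\sum_i \frac{c_i}{2}P_{\mathcal E_i}\le \mathrm{Id}_{\mathcal A_n}$ is equivalent to the scalar statement that for every $A\in\mathcal A_n$,
\[
 \sum_{i=1}^m \frac{c_i}{2}\,\langle P_{\mathcal E_i}A,A\rangle=\sum_{i=1}^m \frac{c_i}{2}\,\|P_{\mathcal E_i}(A)\|^2\le \|A\|^2 .
\]
So the whole matter reduces to a single inequality between Hilbert--Schmidt norms, to be verified pointwise in $A$.

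Next I would feed in the two computations already established. Whether $G_i=\mathrm{Fix}(E_i)$ or $G_i=\mathrm{Stab}(E_i)$, Lemma~\ref{lem:fix} and Lemma~\ref{lem:stab} give $\|P_{\mathcal E_i}(A)\|^2=2\|P_{E_i}A\|^2-\|P_{E_i}AP_{E_i}\|^2$ or $2\|P_{E_i}A\|^2-2\|P_{E_i}AP_{E_i}\|^2$ respectively. In both cases the subtracted term is nonnegative, so uniformly
\[
 \|P_{\mathcal E_i}(A)\|^2\le 2\,\|P_{E_i}A\|^2 ,
\]
which is precisely what makes the factor $\tfrac{c_i}{2}$ collapse to $c_i$: summing, $\sum_i \frac{c_i}{2}\|P_{\mathcal E_i}(A)\|^2\le \sum_i c_i\|P_{E_i}A\|^2$. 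Note that this step handles an arbitrary mixture of $\mathrm{Fix}$ and $\mathrm{Stab}$ subgroups at once.

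It then remains to bound $\sum_i c_i\|P_{E_i}A\|^2$ by $\|A\|^2$ using the hypothesis $\sum_i c_iP_{E_i}\le\mathrm{Id}_{\R^n}$. Because $A$ is antisymmetric, $\|P_{E_i}A\|^2=\mathrm{Tr}({}^{t}A\,P_{E_i}A)=\mathrm{Tr}\big(({}^{t}A\,A)\,P_{E_i}\big)$ after a cyclic permutation, whence
\[
 \sum_{i=1}^m c_i\,\|P_{E_i}A\|^2=\mathrm{Tr}\Big(({}^{t}A\,A)\,\sum_{i=1}^m c_iP_{E_i}\Big).
\]
Here $M:={}^{t}A\,A$ is positive semidefinite and, by assumption, $\mathrm{Id}_{\R^n}-\sum_i c_iP_{E_i}$ is positive semidefinite; since the trace of a product of two positive semidefinite matrices is nonnegative (write $\mathrm{Tr}(MN)=\mathrm{Tr}(M^{1/2}NM^{1/2})$), one gets $\mathrm{Tr}\big(M\sum_i c_iP_{E_i}\big)\le \mathrm{Tr}(M)=\|A\|^2$. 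Chaining the three inequalities yields $\sum_i\frac{c_i}{2}\|P_{\mathcal E_i}(A)\|^2\le\|A\|^2$, i.e. $\sum_i\frac{c_i}{2}P_{\mathcal E_i}\le\mathrm{Id}_{\mathcal A_n}$. Finally I would apply Theorem~\ref{th:lie} with $d_i=c_i/2$, using condition~\eqref{eq:condLie}, to obtain~\eqref{BLineqLie}; the interpretation of the admissible functions (a function of $U_{|E_i}$ for $\mathrm{Fix}(E_i)$, a function of $U(E_i)$ for $\mathrm{Stab}(E_i)$) comes directly from the last statements of the two lemmas. I do not expect a genuine obstacle here: the argument is essentially forced once the quadratic-form reduction is made, and the only point that requires care is the clean cancellation producing exactly the exponent $\tfrac12$, together with the positive-semidefinite trace inequality that converts the $\R^n$-decomposition hypothesis into the bound on $\mathcal A_n$.
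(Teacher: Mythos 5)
Your proof is correct and follows essentially the same route as the paper's: the bound $\|P_{\mathcal E_i}(A)\|^2\le 2\|P_{E_i}A\|^2$ from Lemmas~\ref{lem:fix} and \ref{lem:stab} (dropping the nonnegative terms $\|P_{E_i}AP_{E_i}\|^2$), followed by the trace computation $\sum_i c_i\|P_{E_i}A\|^2=\mathrm{Tr}\big({}^{t}A\big(\sum_i c_iP_{E_i}\big)A\big)\le\|A\|^2$ and an appeal to Theorem~\ref{th:lie} with $d_i=c_i/2$. Your cyclic-permutation phrasing of the trace step is a cosmetic variant (and the antisymmetry of $A$ is not actually needed there, only that $P_{E_i}$ is an orthogonal projection), but the argument is the paper's.
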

\begin{proof}
 By Lemma~\ref{lem:fix} and Lemma~\ref{lem:stab}, for any $A\in \mathcal A_n$, $\|P_{\mathcal E_i}(A)\|^2\le 2\|P_{E_i}A\|^2.$
Hence 
\begin{eqnarray*}
   \sum_{i=1}^{m}\frac{c_i}{2}\|P_{\mathcal E_i}(A)\|^2 &\le &  \sum_{i=1}^{m}c_i\|P_{ E_i}A\|^2
   = \sum_{i=1}^{m}c_i \mathrm{Tr}({^t}AP_{E_i}A) \\
     &=&  \mathrm{Tr}\Big({^t}A \big(\sum_{i=1}^{m}c_i P_{E_i}\big)A\Big)\le \mathrm{Tr}({^t}AA)=\|A\|^2.  
\end{eqnarray*}
   
\end{proof}

Note that we have not used the full strength of Lemmata~\ref{lem:fix} and
\ref{lem:stab}, since we have discarded the terms $\|P_{E_i}AP_{E_i}\|^2$. However, in the case where the $E_i$'s are one dimensional subspaces of $\R^n$, 
these terms vanishes, since in this particular case we have
$$P_{E_i} A P_{E_i} = 0, $$
So,  if $E_i = \R u_i$ where the $u_i$'s are norm $1$ vectors  satisfying the decomposition  of the identity
\begin{equation}\label{onedimdec}
\sum_{i=1}^m c_i \, u_i \otimes u_i =  \mathrm{Id}_{\R^n}
\end{equation}
where $u_i \otimes u_i= P_{E_i}$, then we have, with the notation of the Proposition, 
 $$\sum_{i=1}^{m}\frac{ c_i}{2} P_{\mathcal E_i}= \mathrm{Id}_{\mathcal A_n}.$$
We do not loose in the passage to the Lie algebra. A particular case of interest is when $m=n$, $c_1=\ldots =c_n=1$ and $(u_1, \ldots, u_n)$ is an orthonormal basis of $\R^n$.

For higher dimensional $E_i$'s, it is possible, in some specific situations, to recombine the terms $\|P_{E_i}AP_{E_i}\|^2$
to recover a multiple of $\|A\|^2$ and to improve the exponents in the correlation inequality.
This is easily seen for coordinate subspaces, i.e. spaces spanned by vectors of the canonical basis $(e_1, \ldots, e_n)$ of $\R^n$ (or of any given orthonormal basis, of course). The following proposition puts forward a typical set of conditions
in order that \BL-condition \eqref{ineq:L} is fulfilled. 
It will appear later in similar 
forms.
  
\begin{proposition}\label{prop:coordSOn} Let $\mathcal I$ be a collection of subsets of  $\{1,\ldots,n\}$. Assume that it is written
as a disjoint union $\mathcal I=\mathcal I_1\cup \mathcal I_2$. 
For each nonempty subset $I\in \mathcal I$,  let  $c_I\ge 0$, $E_I:=\mathrm{span}(e_i;\; i\in I)$
and $f_I:SO(n)\to \R^+$ such that 
  \begin{itemize}
        \item if $I\in \mathcal I_1$ then  for all $U$, $f_I(U)$ only depends  on $U_{|E_I}$, 
         \item if $I\in \mathcal I_2$ then for all $U$, $f_I(U)$ only depends on $U(E_I)$.
\end{itemize}
If for all $1\le i, j\le n$ with $i\neq j$ it holds:
  $$\displaystyle  \sum_{\stackrel{I\in \mathcal I_1}{I\cap\{i,j\}\neq\emptyset}} c_I
    +  \sum_{\stackrel{I\in \mathcal I_2}{ \mathrm{card}(I\cap\{i,j\})=1}} \!\!\!c_I\le 1,$$
  then \BL-condition\eqref{ineq:L} is satisfied and in particular
  $$ \int_{SO(n)} \prod_{I\in\mathcal I}  f_I^{c_I} d\mu \le \prod_{I\in\mathcal I}  \left(\int_{SO(n)} f_I\, d\mu\right)^{c_I}.$$ 
\end{proposition}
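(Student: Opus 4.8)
The plan is to verify the \BL-condition by reducing it, via Theorem~\ref{th:lie}, to the decomposition of the identity $\sum_{I\in\mathcal I} c_I P_{\mathcal E_I}\le \mathrm{Id}_{\mathcal A_n}$ in the Lie algebra $\mathcal A_n=so(n)$. Since each $P_{\mathcal E_I}$ is a self-adjoint projection, this operator inequality is equivalent to the scalar bound $\sum_{I\in\mathcal I} c_I\,\|P_{\mathcal E_I}(A)\|^2\le \|A\|^2$ holding for every antisymmetric matrix $A=(a_{k\ell})$. The subgroups attached to the two families are exactly those treated in Lemmata~\ref{lem:fix} and~\ref{lem:stab}: for $I\in\mathcal I_1$ the function $f_I$ depends only on $U_{|E_I}$, so $G_I=\mathrm{Fix}(E_I)$ and $\|P_{\mathcal E_I}(A)\|^2=2\|P_{E_I}A\|^2-\|P_{E_I}AP_{E_I}\|^2$; for $I\in\mathcal I_2$ the function $f_I$ depends only on $U(E_I)$, so $G_I=\mathrm{Stab}(E_I)$ and $\|P_{\mathcal E_I}(A)\|^2=2\|P_{E_I}A\|^2-2\|P_{E_I}AP_{E_I}\|^2$. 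Thus everything reduces to computing these two quadratic forms for coordinate subspaces $E_I$.

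The second step is the explicit coordinate computation. Since $P_{E_I}$ is the diagonal projection onto the coordinates indexed by $I$, one has $\|P_{E_I}A\|^2=\sum_{k\in I}\sum_{\ell}a_{k\ell}^2$ and $\|P_{E_I}AP_{E_I}\|^2=\sum_{k,\ell\in I}a_{k\ell}^2$. Grouping the entries of the antisymmetric matrix $A$ by unordered pairs $\{i,j\}$ with $i\neq j$, where the pair carries weight $2a_{ij}^2$ (as $a_{ij}^2=a_{ji}^2$), a direct bookkeeping yields the clean identities
\begin{align*}
2\|P_{E_I}A\|^2-\|P_{E_I}AP_{E_I}\|^2 &= 2\sum_{\{i,j\}\cap I\neq\emptyset}a_{ij}^2,\\
2\|P_{E_I}A\|^2-2\|P_{E_I}AP_{E_I}\|^2 &= 2\sum_{\mathrm{card}(\{i,j\}\cap I)=1}a_{ij}^2,
\end{align*}
where the first sum is over unordered pairs meeting $I$ and the second over pairs meeting $I$ in exactly one index. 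The subtracted term $\|P_{E_I}AP_{E_I}\|^2$ removes precisely the contribution of pairs lying entirely inside $I$, in the right proportion for each type of subgroup.

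The final step is to sum over $I$ and exchange the order of summation. Collecting the coefficient of each $a_{ij}^2$, the total quadratic form becomes
$$\sum_{I\in\mathcal I}c_I\,\|P_{\mathcal E_I}(A)\|^2 = 2\sum_{\{i,j\}:\,i\neq j} a_{ij}^2\left(\sum_{\substack{I\in\mathcal I_1\\ I\cap\{i,j\}\neq\emptyset}}c_I+\sum_{\substack{I\in\mathcal I_2\\ \mathrm{card}(I\cap\{i,j\})=1}}c_I\right).$$
By the hypothesis the parenthesized sum is at most $1$ for every pair $\{i,j\}$, so the whole expression is bounded by $2\sum_{\{i,j\}:\,i\neq j}a_{ij}^2=\|A\|^2$. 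This establishes the Lie algebra decomposition of the identity, hence the \BL-condition, and integrating against the uniform measure $\mu$ as in Theorem~\ref{th:lie} gives the announced correlation inequality.

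The main obstacle is the bookkeeping of the second step: one must check that the combinatorial weights align exactly, in particular that the $\mathrm{Stab}$ formula (with the factor $2$ on $\|P_{E_I}AP_{E_I}\|^2$) cancels the ``both indices in $I$'' contributions entirely, leaving only the straddling pairs, whereas the $\mathrm{Fix}$ formula retains them. This is exactly why the two index-sums in the hypothesis use the different conditions $I\cap\{i,j\}\neq\emptyset$ and $\mathrm{card}(I\cap\{i,j\})=1$; matching these two cases to Lemmata~\ref{lem:fix} and~\ref{lem:stab} is the only delicate point, the rest being the reduction already provided by the quoted results. Note also that only pairs with $i\neq j$ occur, since $A$ is antisymmetric with vanishing diagonal, which is why the hypothesis is imposed precisely for $i\neq j$.
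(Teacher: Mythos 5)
Your proof is correct and follows essentially the same route as the paper's: it reduces the \BL-condition via Theorem~\ref{th:lie} to the inequality $\sum_I c_I P_{\mathcal E_I}\le \mathrm{Id}_{\mathcal A_n}$, uses the formulas of Lemmata~\ref{lem:fix} and~\ref{lem:stab} together with the coordinate expressions $\|P_{E_I}A\|^2=\sum_{k\in I}\sum_{\ell}a_{k\ell}^2$ and $\|P_{E_I}AP_{E_I}\|^2=\sum_{k,\ell\in I}a_{k\ell}^2$, and exchanges the order of summation to isolate the coefficient of each $a_{ij}^2$, which the hypothesis bounds by $1$. The only difference is cosmetic: you regroup by unordered pairs within each $I$ before summing over $\mathcal I$, whereas the paper sums over $\mathcal I$ first and then symmetrizes over $i<j$, but the bookkeeping is identical.
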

\begin{proof} Simply note that for $A=(a_{i,j})_{1\le i,j\le n}\in \mathcal A_n$, $\|P_{E_I}AP_{E_I}\|^2= \sum_{i,j\in I} a_{i,j}^2$ and 
$$
 \|P_{E_I}A\|^2= \mathrm{Tr}({^t}AP_{E_i}A)= \mathrm{Tr}\Big(\sum_{i\in I} Ae_i\otimes Ae_i\Big)=\sum_{i\in I} \|Ae_i\|^2=
  \sum_{i\in I}\sum_{j=1}^n a_{i,j}^2.
  $$ 
Let us set $\lambda_I:=1$ if $I\in\mathcal I_1$,  $\lambda_I:=2$ if $I\in\mathcal I_2$. Using Lemmata~\ref{lem:fix} and \ref{lem:stab}, and the antisymmetry of $A\in \mathcal A_n$, we have
\begin{eqnarray*}
 &&\sum_I c_I \|P_{\mathcal E_I}(A)\|^2 = \sum_I  c_I \left(2\|P_{E_i} A\|^2-\lambda_I \|P_{E_I}AP_{E_I}\|^2 \right)\\
   &=&  \sum_I  c_I \left( 2\sum_{i\in I}\sum_{j=1}^n a_{i,j}^2-\lambda_I  \sum_{i,j\in I} a_{i,j}^2\right)
   = \sum_{i,j=1}^n a_{i,j}^2 \left(2\sum_{I;\; i\in I} c_I-  \sum_{I;\; i,j\in I} \lambda_I c_I \right) \\
   &=& 2 \sum_{1\le i<j\le n} a_{i,j}^2 \left(\sum_{I;\; i\in I} c_I+\sum_{I;\; j\in I} c_I-  \sum_{I;\; i,j\in I} \lambda_I c_I \right).
\end{eqnarray*}
The latter is upper bounded by $\|A\|^2$ as soon as for all $i\neq j$,
$$ \sum_I c_I\left(\1_{i\in I}+\1_{j\in I}-\lambda_I \1_{i,j\in I} \right)\le 1,$$
which is exactly our hypothesis on the coefficients $(c_I)_{I\in\mathcal I}$.
  Hence $\sum_I c_I P_{\mathcal E_I}\le \mathrm{Id}_{\mathcal A_n}$ and  Theorem~\ref{th:lie} yields the claim.
\end{proof}

Let us restate the previous result in the case we are looking to inequalities involving identical $c_i$'s.
\begin{proposition}
Let  $\mathcal I$ be a family of subsets of $\{1,\ldots,n\}$, and consider 
\begin{eqnarray*}   
  p&:=& \max_{1\le i<j\le n}\mathrm{card}\big\{I\in\mathcal I; \; I\cap \{i,j\}\neq \emptyset\big\},\\
  q&:=& \max_{1\le i<j\le n}\mathrm{card}\big\{I\in\mathcal I; \; \mathrm{card}\big(I\cap \{i,j\}\big)=1\big\},
\end{eqnarray*}  
then for all non-negative functions $g_I, h_I$ defined on suitable spaces,
\begin{eqnarray*}
\int \prod_{I\in \mathcal I} g_I(U_{|E_I}) \, d\mu(U)&\le& \prod_{I\in\mathcal I} \left(\int  g_I(U_{|E_I})^p \, d\mu(U)
\right)^{\frac1p},\\
\int \prod_{I\in \mathcal I} h_I(U(E_I)) \, d\mu(U)&\le &\prod_{I\in\mathcal I} \left(\int  h_I(U(E_I))^q \, d\mu(U)
\right)^{\frac1q}.
\end{eqnarray*}
\end{proposition}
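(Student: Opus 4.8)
The plan is to deduce both inequalities as immediate specializations of Proposition~\ref{prop:coordSOn}, choosing all coefficients equal. The two statements correspond to the two \emph{pure} cases of the mixed proposition, namely $\mathcal I\subset\mathcal I_1$ for the first and $\mathcal I\subset\mathcal I_2$ for the second, and the exponents $p$ and $q$ are exactly the reciprocals of the largest admissible uniform constant in each situation.

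For the first inequality I would take $\mathcal I_2=\emptyset$, so that every $I\in\mathcal I$ is handled through its $\mathrm{Fix}(E_I)$ subgroup and the admissible functions depend on $U_{|E_I}$, and set $c_I=1/p$ uniformly. The hypothesis of Proposition~\ref{prop:coordSOn} then reads, for each pair $i\neq j$,
$$\sum_{\substack{I\in\mathcal I\\ I\cap\{i,j\}\neq\emptyset}} \frac1p \;\le\; 1,$$
which holds by the very definition of $p$ as the maximal such cardinality over all pairs. Applying the conclusion of the proposition with $f_I=g_I^p$ gives $f_I^{c_I}=g_I$ on the left-hand side and $\big(\int f_I\,d\mu\big)^{1/p}=\big(\int g_I^p\,d\mu\big)^{1/p}$ on the right-hand side, which is precisely the claimed inequality.

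For the second inequality I would symmetrically take $\mathcal I_1=\emptyset$, so that every function depends on $U(E_I)$ through the $\mathrm{Stab}(E_I)$ subgroup, and set $c_I=1/q$ uniformly. Now the constraint of Proposition~\ref{prop:coordSOn} becomes
$$\sum_{\substack{I\in\mathcal I\\ \mathrm{card}(I\cap\{i,j\})=1}} \frac1q \;\le\; 1,$$
again valid directly from the definition of $q$. The conclusion with $f_I=h_I^q$ produces the second inequality in the same manner.

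There is no genuine obstacle here: the entire content already sits in Proposition~\ref{prop:coordSOn}, and the only thing to check is that the uniform choices $c_I=1/p$ and $c_I=1/q$ satisfy the pairwise constraints, which is immediate once one recognizes $p$ and $q$ as the worst-case pair counts appearing in those very constraints. The single point worth stating carefully is the bookkeeping that identifies $U_{|E_I}$ with the $\mathrm{Fix}$ case and $U(E_I)$ with the $\mathrm{Stab}$ case, so that the hypotheses of the cited proposition are invoked with the correct family membership.
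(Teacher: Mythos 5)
Your proof is correct and follows essentially the same route as the paper: the paper presents this proposition as a direct restatement of Proposition~\ref{prop:coordSOn} with uniform coefficients, exactly your specializations $\mathcal I_2=\emptyset$, $c_I=1/p$, $f_I=g_I^p$ for the first inequality and $\mathcal I_1=\emptyset$, $c_I=1/q$, $f_I=h_I^q$ for the second, with the pairwise constraints holding by the definitions of $p$ and $q$.
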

Let us put forward two particular cases of application of the previous result:
\begin{itemize}
        \item Blocks of coordinates: if $\mathcal I$ is a non-trivial partition of $\{1,\ldots,n\}$ then each pair $\{i,j\}$ meets at most
         two sets in the family and we get $p=q=2$.
        \item Loomis-Whitney inequality: if $\mathcal I$ is the family of all subsets of $\{1,\ldots,n\}$ of size $k$, then
         any pair meets ${n \choose k}-{n-2 \choose k}$ sets. Hence we have
         $$p= {n \choose k}-{n-2 \choose k}=
          {n-1 \choose k-1}+ {n-2 \choose k-1}.$$ 
          However the number of sets of cardinality $k$ which intersect a given pair
         in exactly one point is ${n \choose k}-{n-2 \choose k}-{n-2 \choose k-2}=2{n-2 \choose k-1}$. So we get a smaller
         exponent 
         $$q=2{n-2 \choose k-1}.$$
\end{itemize}
         It is worth noting that a direct application of Proposition~\ref{genSOn} would have given \emph{worst} estimates (when $k\ge 2$), in both cases. Indeed, if we denote by $P^I$ the projection  
         onto a subspace spanned by $\{e_i, \ i\in I\}$ for $I\subset\{1, \ldots n\}$, we have
         $$\sum_{|I|=k} \frac{n}{k {n \choose k}}P^I = \mathrm{Id}_{\R^n}$$
         and therefore we would get exponent $p$ and $q$ equal to $ 2\frac{k}n {n \choose k}= 2{n-1 \choose k-1}$.

\begin{remark}
On can take advantage of the terms $\|P_EAP_A\|^2$ in more general situations. They have to be rather symmetric
though. Letting $2\le k\le n-1$, one instance is given by the family  of  all the spaces spanned by any $k$  vertices
of a regular simplex in $\R^n$ with center of mass at the origin. 
\end{remark}

\subsubsection{Passing to quotients}
\label{quotients}
So far, we have taken advantage of right-invariances of the functions $f_i$.
Plainly, similar results hold if all the functions are left-invariant instead.  It would be very interesting
to get better inequalities when the functions $f_i$ enjoy left and right invariances together (this 
would encompass functions on $SO(n)$ depending on matrices $U$ only through submatrices). Unfortunately, our 
approach does not give interesting general results in this direction (nothing better than what one gets by applying
first Hölder's inequality in order to get two integrals; each of these integrals is then upper-bounded
by using only one-sided invariance).  In the specific case when the functions have different right-invariances
and a common left invariance, our results can be stated instead on the left-quotient. This is a way to get
inequalities for  homogeneous spaces corresponding to a compact Riemmanian Lie group. 

Let us  illustrate this remark for the sphere: if $E_i$ is a subspace of $\R^n$ and $f_i:\Sp^{n-1}\to \R^+$
is of the form $f_i(x)=g_i(P_{E_i} x)$, we may introduce $F_i:SO(n)\to \R^+$ defined by 
$F_i(U)=g_i(P_{E_i} {^t}Ue_1)=g_i( {^t}(UP_{E_i})e_1).$ Then $F_i$ is $\mathrm{Fix}(E_i)$-right-invariant and also
$\mathrm{Fix}(\R e_1)$-left-invariant. Applying our results on $SO(n)$ and using the fact that the
law of ${^t}Ue_1$ under the Haar probability measure on $SO(n)$ is the uniform distribution on the sphere
recovers the main result of \cite{B-CE-M} which extends inequality~\eqref{eq:BLsphere}: if $\sum_i c_i P_{E_i}\le\mathrm{Id}_{\R^n}$
then 
$$\int_{\Sp^{n-1}} \prod_{i} f_i^{c_i/2} d\sigma \le \prod_i \left(\int_{\Sp^{n-1}} f_i\, d\sigma\right)^{c_i/2}.$$
Moreover, if $f:\Sp^{n-1}\to \R^+$ is of the form $f(x)=g\big(|P_Ex|\big),$ then the function 
 $F:SO(n)\to \R^+$ defined by 
$F(U)=g\big(|P_{E} {^t}Ue_1|\big) $  is $\mathrm{Stab}(E_i)$-right-invariant and  $\mathrm{Fix}(\R e_1)$-left-invariant.
This allows us to transfer all of our $SO(n)$ results to the sphere. 

Actually, a more general route is to note that~\BL-condition, in the form~\eqref{eq:condLie}, passes to quotient. 
\begin{lemma}
Let $E$ be a Riemannian homogeneous space and $G$ a compact Riemannian Lie group of isometries acting transitively on $E$.  Assume we are in the situation of Theorem~\ref{th:lie}. A function $f:E\to \R$ is said $G_i$-invariant if $f(g\cdot x) = f(x)$ for every $x\in E$ and $g\in G_i$. We can consider the associated $\tilde T_i:E\to E/G_i$ or more simply, with the notation~\eqref{defE_i},
$$\mathcal E_i (x) =\{\nabla f (x)\; ; \ \textrm{ $f:E\to \R$ is $G_i$-invariant} \}.$$
If condition~\eqref{eq:condLie} holds on $G$, then the \BL-condition holds in $E$ in the equivalent form~\eqref{eq:decompx}.
\end{lemma}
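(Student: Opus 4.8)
The plan is to realize $E$ as a quotient of $G$ and to push the decomposition of the identity down through the quotient map, arguing exactly as in the proof of Theorem~\ref{th:lie} but now composed with a Riemannian submersion. Fix a base point $o\in E$, let $K=\{g\in G:\ g\cdot o=o\}$ be its stabilizer, with Lie algebra $\mathcal K$, and let $\pi:G\to E$, $\pi(g)=g\cdot o$, be the orbit map, which identifies $E$ with $G/K$ and whose fibre through $g$ is $gK$. I would equip $G$ with a bi-invariant metric for which $\pi$ is a Riemannian submersion onto $E$ for its given metric; then at each $g$ the vertical space is $\ker d\pi_g=d(L_g)_e\mathcal K$, and $d\pi_g$ restricts to a linear isometry from the horizontal space $\mathcal H_g:=\big(d(L_g)_e\mathcal K\big)^\bot$ onto $T_{\pi(g)}E$.

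First I would record how $G_i$-invariant functions on $E$ lift. If $f:E\to\R$ is $G_i$-invariant then $\hat f:=f\circ\pi$ satisfies $\hat f(gh)=\hat f(g)$ for $h\in G_i$ (left-$G_i$-invariance) and $\hat f(gk)=\hat f(g)$ for $k\in K$ (right-$K$-invariance). The right-$K$-invariance forces $\nabla\hat f(g)$ to be horizontal, and the standard gradient rule for Riemannian submersions gives $d\pi_g\big(\nabla\hat f(g)\big)=\nabla f(\pi(g))$. The left-$G_i$-invariance is the left-handed analogue of \eqref{invLie}: differentiating $\hat f(\exp(tv)g)=\hat f(g)$ for $v\in\mathcal G_i$ yields $(d(R_g)_e)^{-1}\nabla\hat f(g)\in\mathcal G_i^\bot=\mathcal E_i$. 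Thus every $v_i=\nabla f(\pi(g))\in\mathcal E_i(\pi(g))$ is of the form $v_i=d\pi_g(u_i)$ with $u_i:=\nabla\hat f(g)$ horizontal and lying in $d(R_g)_e\mathcal E_i$.

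Then, at a fixed $x=\pi(g)$ and for arbitrary $v_i\in\mathcal E_i(x)$ written as above, I would simply estimate. Since $d\pi_g$ is a linear isometry on $\mathcal H_g$ and each $u_i$ (hence $\sum_i d_i u_i$) is horizontal, one has $|v_i|=|u_i|$ and $\big|\sum_i d_i v_i\big|=\big|\sum_i d_i u_i\big|$. Because $R_g$ is an isometry (bi-invariance), transporting \eqref{eq:condLie} by $d(R_g)_e$ — in the norm-of-the-adjoint form recalled just before Fact~\ref{BLRiemannian} — gives $\big|\sum_i d_i u_i\big|^2\le\sum_i d_i|u_i|^2$ for all $u_i\in d(R_g)_e\mathcal E_i$. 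Hence
\[
\Big|\sum_i d_i v_i\Big|^2=\Big|\sum_i d_i u_i\Big|^2\le\sum_i d_i|u_i|^2=\sum_i d_i|v_i|^2 ,
\]
which by the same equivalence is precisely the decomposition \eqref{eq:decompx} on $T_xE$. As $x$ was arbitrary, the \BL-condition holds on $E$.

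The main obstacle is not the inequality itself — once the submersion picture is in place the chain above is essentially a transport of \eqref{eq:condLie} — but setting that picture up correctly. One must choose the bi-invariant metric on $G$ so that $\pi$ is genuinely a Riemannian submersion onto $E$ for the given metric (the ``normal homogeneous'' compatibility), and one must keep careful track of the fact that lifts of $G_i$-invariant functions are \emph{left}-invariant, so that the relevant directions are $d(R_g)_e\mathcal E_i$ and one invokes the left-handed form of Theorem~\ref{th:lie}, while the vertical/horizontal splitting on the $K$-side is governed by $d(L_g)_e\mathcal K$. Keeping these two one-parameter families (left by $G_i$, right by $K$) and their isometric trivializations from being confused is the only delicate bookkeeping.
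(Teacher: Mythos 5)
Your proof is correct and takes essentially the same route as the paper: your composite $d\pi_g\circ d(R_g)_e$ is exactly the paper's isometry $\pi_x:A\mapsto \frac{d}{dt}\big|_{t=0}\exp(tA)\cdot x$ from $\mathcal G_x^\perp$ onto $T_xE$, your horizontality-plus-left-invariance observation is its inclusion $\mathcal E_i(x)\subset \pi_x(\mathcal E_i)$, and your final estimate is the norm form (recalled before Fact~\ref{BLRiemannian}) of the transported inequality $\sum_i d_i P_{\pi_x(\mathcal E_i)}\le \mathrm{Id}_{T_xE}$ obtained from~\eqref{eq:condLie}. If anything, your bookkeeping is slightly more careful than the paper's: the paper conjugates the projections directly and asserts in passing that $G_x\subset G_i$ (which is neither needed nor true in general, e.g.\ on the sphere with $G_i=\mathrm{Fix}(E_i)$), whereas your norm-form estimate applied to horizontal lifts $u_i\in d(R_g)_e\mathcal E_i$ sidesteps any such claim, at the mere cost of making explicit the normal-homogeneous compatibility that the paper also uses implicitly when it declares $\pi_x$ an isometry.
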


\begin{proof}
Fix $x\in E$ and let $G_x=\{g\in G \, ;\ g\cdot x = x\}$.   Then, if  we decompose the algebra $\mathcal G = T_{\mathrm{Id}} G$ (equipped with its Euclidean structure) as an orthonormal sum  $\mathcal G = \mathcal G_x \oplus \mathcal G_x^\perp$ where $ \mathcal G_x$ is the Lie algebra associated to $G_x$, we have that $ \mathcal G_x^\perp$ is isometric to $T_x M$ by the  isometry map
$$\pi=\pi_x \,:\, A \longrightarrow \pi(A)=\frac{d}{dt}_{|t=0} \exp(tA)\cdot x .$$
We see that $\pi(\mathcal G_i) \subset \mathcal E_i(x)^\perp$ and therefore $\mathcal E_i(x) \subset \pi(\mathcal E_i) $. Note that $G_x \subset G_i$ and $\mathcal E_i \subset \mathcal G_x^\perp$. Since $P_{ \pi(\mathcal E_i)}= \pi P_{\mathcal E_i} \pi^{-1}$, we get from~\eqref{eq:condLie}  that
$$\sum_{i=1}^m c_i \,  P_{\mathcal E_i(x)} \le \mathrm{Id}_{T_x E}.$$

\end{proof}

\medskip

It is sometimes necessary to work directly on quotients, in particular for quotients of finite measure with a 
cover of infinite measure. We briefly discuss the example of the flat torus $(\R/\mathbb Z)^n$. We consider 
for $i=1,\ldots,m$, rational vectors $u_i\in\mathbb Q^n$.  For each $i$ let $\ell_i$ be the largest common
divisor of the numbers $\langle u_i,e_1\rangle,\ldots,\langle u_i,e_n\rangle$. In order to define the map $x\mapsto \langle x,u_i\rangle$ on the torus, one has to identify $\langle u_i,e_k\rangle$ to $0$ for all $k$. This amounts to 
quotient $\R$ by $\sum_{k=1}^m \langle u_i,e_k\rangle \mathbb Z=\ell_i \mathbb Z$.
Let $T_i:(\R/\mathbb Z)^n \to \R/\ell_i \mathbb Z$ be the map defined by $T_i(x)=\langle x,u_i  \rangle \mod \ell_i$.
One easily checks that the Laplacian commutes with $T_i$ (same calculation as in $\R^n$). Since for every $x$,
$\nabla (f_i\circ T_i)(x)$ is a multiple of $u_i$, if
 $\sum_{i=1}^m c_i u_i\otimes u_i\le \mathrm{Id}_{\R^n }$ it follows
that 
$$\int_{(\R/\mathbb Z)^n} \prod_i f_i(\langle x,u_i\rangle )^{c_i} dx\le \prod_{i=1}^m 
\left(\int_{(\R/\mathbb Z)^n}  f_i(\langle x,u_i\rangle) dx \right)^{c_i}=
\prod_{i=1}^m 
\left(\int_{\R/\ell_i \mathbb Z}  f_i \right)^{c_i}.$$

\subsection{Dirichlet distributions and their relatives}
For $x\in \R^n$, we set $S(x)=x_1+\cdots+x_n$. Let $\alpha\in (0,+\infty)^n$, then by definition the 
Dirichlet  law  $D_{n-1}(\alpha)$ is the distribution of 
$$ \frac{(X_1,\ldots,X_{n-1})}{X_1+\cdots+X_n}$$
where $X_1,\ldots,X_n$ are independent random variables such that for each $i$, $X_i$
is $Gamma(\alpha_i)$ distributed. More precisely it is supported on $T_{n-1}=\{ y\in\R_+^{n-1}; \;
y_1+\cdots+y_{n-1}\le 1\}$ and 
 $$ D_{n-1}(\alpha)(dy)= \frac{\Gamma(S(\alpha))}{\prod_{i\le n} \Gamma(\alpha_i)} 
 \Big(\prod_{i\le n-1}y_i^{\alpha_i-1}\Big) \Big(1-\sum_{i\le n-1} y_i\Big)^{\alpha_n-1} \1_{T_{n-1}}(y) \, dy.$$
 In order to get more symmetric results, we prefer to work with another representation: we consider
  the law $\widetilde D_{n-1}(\alpha)$   of 
$$ \frac{(X_1,\ldots,X_{n})}{X_1+\cdots+X_n}.$$
It is supported on the regular simplex $\Delta_{n-1}=\{ y\in\R_+^{n}; \;
y_1+\cdots+y_{n}=1\}$ and its density with respect to Lebesgue measure on $\Delta_{n-1}$ is proportional
to $y\mapsto \prod_{i\le n} y_i ^{\alpha_i-1}$.
 Recall that some Dirichlet distributions are closely related to uniform spherical measures. Indeed if $G_i$ are independent variables with distribution $\exp(-t^2) dt/\sqrt{\pi}$,
  then the uniform measure on $\Sp^N$ coincides with the law of 
  $$ \frac{(G_1,\ldots,G_N)}{\sqrt{G_1^2+\cdots+G_N^2}}.$$
  Note that $G_i^2$ has distribution $Gamma(1/2)$. Write $N=k_1+\cdots+k_n$. It is then clear that the  image of the uniform probability on $\Sp^{N-1}$
  by the map
  $$ x\mapsto (x_1^2+\cdots+x_{k_1}^2,\; x_{k_1+1}^2+\cdots+x_{k_1+k_2}^2,\;\ldots,\: x_{k_1+\cdots+k_{n-1}+1}^2+\cdots+x_N^2).$$
  is $\widetilde D_{n-1}(k_1/2,\ldots,k_n/2).$ This allows us  to transfer some of our spherical results, but 
only to Dirichlet laws with half integer coefficients. In order to deal with general coefficients the following
direct study is needed.

The measure $D_{n-1}(\alpha)$ is known (see \cite{E-K,M-R}) to be reversible and ergodic for the following Fleming-Viot operator
$$ L_\alpha f=\sum_{i\le n-1} x_i \partial_{i,i}^2f- \sum_{i,j\le n-1} x_ix_j \partial_{i,j}^2f
  +\sum_{i\le n-1} \big( \alpha_i-S(\alpha)x_i\big) \partial_i f.$$
In the symmetric representation associated to $\widetilde D_{n-1}(\alpha)$, it is natural to consider   
 the operator $\widetilde L_\alpha$ defined for smooth functions $f:\R^n \to \R^+$ and for  $x\in \Delta_{n-1}$
by 
$$ \widetilde L_\alpha f(x)=\sum_{i\le n} x_i \partial_{i,i}^2f(x)- \sum_{i,j\le n} x_ix_j \partial_{i,j}^2f(x)
  +\sum_{i\le n} \big( \alpha_i-S(\alpha)x_i\big) \partial_i f(x).$$
It is not hard to check that $ \widetilde L_\alpha f$ only depends on the restriction of $f$ to $\Delta_{n-1}$
(in the intrinsic formulation $\partial_i g$ is to be understood as $Dg\cdot  P_H e_i=Dg\cdot  (e_i-\1/n)$, where
$\1=(1,\ldots,1)\in \R^n$ and $H=\1^\bot$). However it is convenient to be able to apply $ \widetilde L_\alpha f$
to functions $f$ defined on the whole space. For example if we write $f(y)=g(y_1,\ldots,y_{n-1})$, $y\in \Delta_{n-1}$
then it is clear that $ \widetilde L_\alpha f(y)=  L_\alpha g(y_1,\ldots,y_{n-1})$; hence the properties
of $L_\alpha$ will pass to $ \widetilde L_\alpha f$ (in particular $\widetilde D_{n-1}(\alpha)$ is reversible and
ergodic for the semigroup generated by $ \widetilde L_\alpha f$).

The   carré du champ of $ \widetilde L_\alpha $ can be expressed in the following convenient form, for $x\in\Delta_{n-1}$:
\begin{eqnarray*}
\Gamma(f) &=& \sum_{i\le n} x_i (\partial_i f)^2- \sum_{i,j\le n} x_ix_j \partial_i f \partial_j f \\
  &=& \sum_{i\le n} x_i (\partial_i f)^2 - \Big(\sum_{i\le n} x_i\partial_i f\Big)^2\\
  &=& \frac12 \sum_{i\neq j} x_ix_j (\partial_i f-\partial_j f)^2,
\end{eqnarray*}
where we have noted that $\Gamma(f)$ is actually a variance with respect to the probability measure
$\sum x_i \delta_i$. The last formula comes from the representation $\mathrm{Var}(X)=\frac12 E((X-X')^2)$ where
$X'$ is an independent copy of $X$. We are ready to establish

\begin{proposition}\label{prop:Dirichlet} Let $\mathcal I$ be a collection of subsets of  $\{1,\ldots,n\}$. Assume that it is written
as a disjoint union $\mathcal I=\mathcal I_1\cup \mathcal I_2$. 
For each nonempty subset $I\in \mathcal I$,  let  $c_I\ge 0$, 
and $f_I: \Delta_{n-1}\to \R^+$ such that 
  \begin{itemize}
        \item if $I\in \mathcal I_1$ then  for all $x$, $f_I(x)$ only depends  on $(x_k)_{k\in I}$, 
         \item if $I\in \mathcal I_2$ then for all $x$, $f_I(x)$ only depends on $\sum_{k\in I} x_k$.
\end{itemize}
If for all $1\le i, j\le n$ with $i\neq j$ it holds:
  $$\displaystyle  \sum_{\stackrel{I\in \mathcal I_1}{I\cap\{i,j\}\neq\emptyset}} c_I
    +  \sum_{\stackrel{I\in \mathcal I_2}{ \mathrm{card}(I\cap\{i,j\})=1}} \!\!\!c_I\le 1,$$
  then the \BL-condition~\eqref{ineq:L} is satisfied and if $X$ is $\widetilde{D}_{n-1}(\alpha)$ distributed
  $$ E \Big(\prod_{I\in\mathcal I}  f_I^{c_I}(X)\Big) \le \prod_{I\in\mathcal I} \Big( E f_I(X)\Big)^{c_I}.$$ 
\end{proposition}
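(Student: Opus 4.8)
The plan is to reduce everything to Fact~\ref{BLdiffusion} and then to a family of one-pair-at-a-time Cauchy--Schwarz inequalities, exactly in the spirit of the proof of Proposition~\ref{prop:coordSOn}. First I would record that the maps underlying the two families commute with $\widetilde L_\alpha$: writing $T_I(x)=(x_k)_{k\in I}$ for $I\in\mathcal I_1$ and $T_I(x)=\sum_{k\in I}x_k$ for $I\in\mathcal I_2$, a direct substitution into the explicit expression of $\widetilde L_\alpha$ shows that $\widetilde L_\alpha(g\circ T_I)$ again factors through $T_I$. For the sum-type maps one finds $\widetilde L_\alpha(h(u))=u(1-u)h''(u)+(\alpha_I-S(\alpha)u)h'(u)$ with $u=\sum_{k\in I}x_k$ and $\alpha_I=\sum_{k\in I}\alpha_k$, a one-dimensional Jacobi generator; for the coordinate-type maps the only global quantity entering is the constant $S(\alpha)$, so the result depends only on $(x_k)_{k\in I}$. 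Since $\widetilde L_\alpha$ is a diffusion operator, Fact~\ref{BLdiffusion} then tells us that the \BL-condition~\eqref{ineq:L} is equivalent to the $\Gamma$-inequality~\eqref{cond:Gamma}.

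The heart of the argument is to verify~\eqref{cond:Gamma} using the variance form of the carr\'e du champ computed above. For $x\in\Delta_{n-1}$ both sides of~\eqref{cond:Gamma} are written as $\frac12\sum_{i\neq j}x_ix_j(\,\cdot\,)^2$, and since the weights $x_ix_j$ are non-negative the inequality is implied by the pointwise bound, valid for each pair $i\neq j$,
$$\Big(\sum_{I\in\mathcal I}c_I\,(\partial_i f_I-\partial_j f_I)\Big)^2\le \sum_{I\in\mathcal I}c_I\,(\partial_i f_I-\partial_j f_I)^2.$$
Only the differences $\partial_i f_I-\partial_j f_I$ occur, so I may freely use the ambient extensions of the $f_I$ to $\R^n$. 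Setting $a_I:=\partial_i f_I-\partial_j f_I$, I would then determine which indices contribute: if $I\in\mathcal I_1$ then $\partial_k f_I=0$ for $k\notin I$, so $a_I\neq 0$ forces $I\cap\{i,j\}\neq\emptyset$; if $I\in\mathcal I_2$ then $\partial_k f_I$ takes one common value on $I$ and vanishes off $I$, so $a_I\neq 0$ forces $\mathrm{card}(I\cap\{i,j\})=1$ (when both indices lie in $I$ the two derivatives coincide and cancel).

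Let $\mathcal J_{i,j}$ be this set of contributing indices. Cauchy--Schwarz restricted to $\mathcal J_{i,j}$ gives
$$\Big(\sum_{I\in\mathcal J_{i,j}}c_I a_I\Big)^2\le \Big(\sum_{I\in\mathcal J_{i,j}}c_I\Big)\Big(\sum_{I\in\mathcal J_{i,j}}c_I a_I^2\Big),$$
and by the two membership conditions just displayed $\sum_{I\in\mathcal J_{i,j}}c_I$ is exactly the left-hand side of the hypothesis, hence at most $1$. This yields the pointwise inequality for every pair, hence~\eqref{cond:Gamma}, hence the \BL-condition. Finally, since $\widetilde D_{n-1}(\alpha)$ is the invariant and ergodic measure of the semigroup generated by $\widetilde L_\alpha$, letting $t\to\infty$ in the local inequality~\eqref{ineq:Pt} of Proposition~\ref{prop:abstract} produces the stated bound $E\big(\prod_I f_I^{c_I}(X)\big)\le\prod_I(Ef_I(X))^{c_I}$.

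I expect the only genuinely delicate point to be the commutation step: one must check that restricting to a block of coordinates and summing a block of coordinates each produce a bona fide Markov generator on the corresponding quotient, which is the probabilistic consistency of Dirichlet laws under marginalisation and aggregation. Once that is in place, the recognition of $\Gamma$ as a variance and the combinatorial bookkeeping of which $I$ survive for a given pair $\{i,j\}$ are routine and parallel the proof of Proposition~\ref{prop:coordSOn} almost verbatim.
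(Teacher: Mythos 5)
Your proposal is correct and follows essentially the same route as the paper's own proof: verify commutation by direct substitution into $\widetilde L_\alpha$, rewrite~\eqref{cond:Gamma} via the variance form of the carr\'e du champ, reduce to a pointwise inequality for each pair $i\neq j$, identify the contributing indices $I$ exactly as in the hypothesis, and conclude by ergodicity. Your Cauchy--Schwarz step with $\sum_{I\in\mathcal J_{i,j}}c_I\le 1$ is the same estimate the paper obtains from convexity of the square function, and your explicit identification of the projected generator as a one-dimensional Jacobi operator is a correct (if unneeded) refinement of the paper's bare commutation check.
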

\begin{proof}
   First, we check the commutation relations. Since the coordinates play symmetric roles, we may assume
that $I=\{1,\ldots,k\}$. Also we may extend our functions to $\R_+^n$. 
If for all $x$, $g(x)=f(x_1+\cdots+x_n)$ it is obvious that 
$$ \partial_ig(x)= \left\{ 
  \begin{array}{cl}
     0 & \mbox{if } i>k\\
     f'(x_1+\cdots+x_k)  & \mbox{if } i\le k
   \end{array}
 \right. \quad \mbox{ and } \quad
 \partial^2_{i,j} g(x)= \left\{ 
  \begin{array}{cl}
     0 & \mbox{if } i \mbox{ or } j>k\\
     f''(x_1+\cdots+x_k)  & \mbox{if } i, j\le k.
   \end{array}
 \right.
$$
It is then clear that $\widetilde L_\alpha g(x)$ is a function of $x_1+\cdots+x_k$.
Similarly, if $g(x)=h(x_1,\ldots,x_k)$ then $\widetilde L_\alpha g(x)$ is a function of $(x_i)_{i\le k}$.

Next, we have to check the analogue of Condition~\eqref{cond:Gamma}, namely
$$ \Gamma\Big(\sum_I c_I f_I\Big) \le \sum_I c_I \Gamma(f_I).$$
In view of the above expression of $\Gamma$, this amounts to show that for all $x\in \Delta_{n-1}$,
$$ \sum_{1\le i\neq j\le n}x_ix_j \Big(\sum_I c_I \partial_i f_I -\sum_I c_I \partial_j f_I\Big)^2
 \le \sum_I c_I  \sum_{1\le i\neq j\le n} x_ix_j \Big(\partial_i f_I-\partial_j f_I\Big)^2.$$
Hence it is sufficient to show that for all $i\neq j$, it holds
 $$  \Big(\sum_I c_I \big(\partial_i f_I - \partial_j f_I\big) \Big)^2 \le  \sum_I c_I \Big(\partial_i f_I-\partial_j f_I\Big)^2.$$
If $f_I(x)$ only depends on $(x_k)_{k\in I}$ then $ \partial_i f_I - \partial_j f_I=0$ if $\{i,j\}\cap I=\emptyset$.
Moreover if $f_I(x)=g(\sum_{k\in I}x_k)$ then   $ \partial_i f_I - \partial_j f_I=0$ also if $\{i,j\} \subset I$.
Hence the summations on $I$ actually only involve the sets $I\in \mathcal I_1$ such that $\{i,j\}\cap I\neq \emptyset$
and the sets $I\in \mathcal I_2$ such that $\mathrm{card}(\{i,j\}\cap I)=1$. By hypothesis the sum of the corresponding coefficients
$c_I$ is at most one, so that the required inequality is a mere consequence of the convexity of the square function.
Hence Condition~\eqref{cond:Gamma} holds true and we get the local inequality. By ergodicity the inequality passes
to the measure $\widetilde D_{n-1}( \alpha)$.
\end{proof}

Let $p>0$. Let $B_p^n=\{x\in\R^n;\; \sum_i |x_i|^p\le 1\}$ be the unit ball for the $\ell_p$ norm on $\R^n$.
On the corresponding unit sphere  $\partial B_p^n=\{x\in\R^n;\; \sum_i |x_i|^p=1 \}$, one often considers
the cone measure $\mu_p^n$ defined by $\mu_p^n(A)= \mathrm{Vol}_n([0,1].A)/ \mathrm{Vol}_n(B_p^n)$, $A\subset \partial B_p^n$.
Here $[0,1]\cdot A$ is the intersection of $B_p^n$ with the cone of apex at the origin spanned by $A$.

\begin{corollary}
   Let $X$ be a random vector on $\R^n$. Assume that it is either uniformly distributed on  $B_p^n$ or
   distributed according to the cone measure on $\partial B_p^n$. Then for all even functions $f_i:[-1,1]\to\R^+$
  $$ E\Big(\prod_{i=1}^{n} f_i(X_i)\Big) \le \prod_{i=1}^{n} E\Big(f_i(X_i)^2\Big)^{\frac12}.$$
\end{corollary}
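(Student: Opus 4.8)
The plan is to reduce both cases to Proposition~\ref{prop:Dirichlet} by exhibiting $(|X_1|^p,\ldots,|X_n|^p)$ as (part of) a symmetric Dirichlet vector, after which the even functions $f_i$ become functions of a single Dirichlet coordinate. The key point, as in the spherical--Dirichlet correspondence already used in the text, is that $\ell_p$ balls and their cone measures admit generalized-Gaussian representations.

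First I would record those representations. If $g_1,\ldots,g_n$ are i.i.d. with density proportional to $e^{-|t|^p}$, then $\Theta=(g_1,\ldots,g_n)/\|g\|_p$ has law $\mu_p^n$; and if moreover $Z$ is an independent variable with density $e^{-z}\1_{z\ge0}$, then $(g_1,\ldots,g_n)/\big(\|g\|_p^p+Z\big)^{1/p}$ is uniformly distributed on $B_p^n$. The one computation I need is that $|g_i|^p$ has a $Gamma(1/p)$ law (the change of variables $w=|t|^p$ turns $e^{-|t|^p}\,dt$ into a density proportional to $w^{1/p-1}e^{-w}\,dw$) and that $Z$ is $Gamma(1)$.

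Consequently, writing $Y_i=|X_i|^p$: in the cone-measure case $\sum_i Y_i=1$ and $(Y_1,\ldots,Y_n)$ follows the symmetric Dirichlet law $\widetilde D_{n-1}(1/p,\ldots,1/p)$ on $\Delta_{n-1}$, while in the uniform-ball case $(Y_1,\ldots,Y_n,\,Z/(\|g\|_p^p+Z))$ follows $\widetilde D_{n}(1/p,\ldots,1/p,1)$ on $\Delta_{n}\subset\R^{n+1}$, so that $(Y_1,\ldots,Y_n)$ is the vector of the first $n$ coordinates of such a law. Because each $f_i$ is even, $f_i(X_i)$ depends on $X_i$ only through $|X_i|^p=Y_i$; precisely $f_i(X_i)^2=h_i(Y_i)$ with $h_i(t):=f_i\big(t^{1/p}\big)^2$, a function of the single coordinate $Y_i$. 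This use of evenness is exactly what lets each factor pass through the map $X_i\mapsto|X_i|^p$ and become a legitimate one-coordinate function of the Dirichlet vector.

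Then I would apply Proposition~\ref{prop:Dirichlet} to the Dirichlet vector $Y$ (of ambient dimension $n$ in the first case, $n+1$ in the second), taking $\mathcal I=\mathcal I_1=\{\{1\},\ldots,\{n\}\}$ (the singletons, omitting the extra coordinate $\{n+1\}$ in the ball case), $\mathcal I_2=\emptyset$, and $c_{\{i\}}=\tfrac12$. The hypothesis to verify is, for each pair $i\neq j$, $\sum_{I\cap\{i,j\}\neq\emptyset}c_I\le1$; here at most the two singletons $\{i\}$ and $\{j\}$ contribute, giving $\tfrac12+\tfrac12=1$, so it holds with equality (and in the ball case a pair involving the omitted index $n+1$ contributes only $\tfrac12$). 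The conclusion reads $E\big(\prod_i h_i(Y_i)^{1/2}\big)\le\prod_i\big(E\,h_i(Y_i)\big)^{1/2}$, and substituting $h_i(Y_i)^{1/2}=f_i(X_i)$ and $h_i(Y_i)=f_i(X_i)^2$ gives exactly the asserted inequality. The only genuinely non-formal step is the identification of the pushforward laws as Dirichlet distributions; once the generalized-Gaussian representations are in hand this is just the $Gamma$-scaling computation above, and everything else is a direct specialization of Proposition~\ref{prop:Dirichlet}.
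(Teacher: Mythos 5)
Your proposal is correct and follows essentially the same route as the paper: reduce to Proposition~\ref{prop:Dirichlet} with singletons and $c_{\{i\}}=\tfrac12$, using the generalized-Gaussian representation of the cone measure (with $|g_i|^p\sim Gamma(1/p)$) and the Barthe--Gu\'edon--Mendelson--Naor representation for the uniform measure on $B_p^n$, evenness serving exactly to make each $f_i(X_i)^2$ a one-coordinate function of the Dirichlet vector. The only difference is cosmetic: the paper spells out the cone-measure case and merely cites \cite{bartgmn04pagl} for the ball, whereas you make the ball case explicit via the auxiliary $Gamma(1)$ coordinate on $\Delta_n\subset\R^{n+1}$, which is a correct and slightly more complete rendering of the same argument.
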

\begin{proof}
   This is deduced from a particular case of the previous result on Dirichlet distributions, which ensures
that for $Y$ distributed according to $\widetilde D_{n-1}(\alpha)$, and $g_i:[0,1]\to \R^+,$
a similar inequality holds: $E\prod g_i(Y_i) \le \prod \big(Eg_i^2(Y_i)\big)^{1/2}$.
Indeed the uniform measure on $B_p^n$ and the cone measure on $\partial B_p^n$
can be viewed as symmetrized versions of the images of Dirichlet laws by maps of the 
form $T(x_1,\ldots,x_n)=(T_1(x_1),\ldots, T_n(x_n))$. Hence if we choose $g_i=f_i\circ T_i$ in the latter
inequality, we get the claim.
  Let us make this strategy explicit in the case of the cone measure. Let $\varepsilon_i$, $G_i$, $i=1,\ldots,n,$ 
be independent random variables. Assume that $\varepsilon_1$ is uniform on $\{-1,1\}$ and $G_i$ distributed
according to $e^{-t^p} dt/\Gamma(1+1/p)$. Then it is known that the vector 
$$ X=\frac{(\varepsilon_1G_1,\ldots,\varepsilon_nG_n)}{\Big(G_1^p+\cdots+G_n^p\Big)^{\frac{1}{p}}}$$
is distributed according to the cone measure.
Hence $|X_i|^p=G_i^p/(G_1^p+\cdots+G_n^p)$ where  $G_i^p$ is $Gamma(1/p)$-distributed. So applying
the Brascamp-Lieb inequality for $f_i(x)=g_i(x_i^{1/p})$ yields the claim.

A similar approach is possible for the uniform distribution on $B_p^n$ thanks to the representation
provided in \cite{bartgmn04pagl}.
\end{proof}

\begin{remark}
   The cone measure on $\partial B_2^n$ is simply the uniform measure on $\Sp^{n-1}$, for which a similar inequality
  holds for general functions $f_i$ (i.e. it is not necessary to assume that they are even). 
  Hence one way ask whether   the
 symmetry assumption in the previous corollary is really needed. In order to remove it one would 
  need a result for symmetrized   Dirichlet laws, namely for measures on $\partial B_1^n$ with density
with respect to Lebesgue  measure proportional to $\prod_i |x_i|^{\alpha_i-1}$. 
At first sight, there does not seem to 
be any problem to extend our approach. However the ergodicity of these measures is a delicate issue.
Indeed the fact that the density vanishes inside the domain may, in terms of the corresponding random process, create
potential barriers that may not be crossed or potential wells into which the process may get stuck. On the technical 
level, the domain of the operator may be too small to contain enough non-symmetric functions.
 \end{remark}

 \begin{remark}\label{rem:general}
   Proposition \ref{prop:Dirichlet} and  many results of this work, involve two kinds of functions which depend only on
   some coordinates $(x_k)_{k\in I}$ (some depend on all these coordinates and some depend on them only through their
   sum). It is possible to consider more general dependences. We have not tried to reach the highest generality in
    this respect. Let us briefly mention a quite general extension of Proposition~\ref{prop:Dirichlet}: 
    we could consider functions $f_I$ where $I=(I_1,\ldots, I_K)$ is a collection of disjoint subsets of $\{1,\ldots,n\}$, such that $f_I(x)$ only depends on 
    $$T_I(x):=\left(\sum_{i\in I_1}x_i,\ldots,\sum_{i\in I_K} x_i\right).$$
    One can check that the map $T_I$ commutes with the Fleming-Viot operator (this uses the disjointness of 
     $I_1,\ldots, I_K$). If one considers now a collection of functions $(f_I)_{I\in \mathcal I}$ and corresponding
      coefficients $( f_I)_{I\in \mathcal I}$, then a Brascamp-Lieb inequality holds provided for all $i\neq j$ in
       $\{1,\ldots,n\}$, $\sum_{I\in A_{i,j}} c_I\le 1$, where
       $$ A_{i,j}=\Big\{I\in \mathcal I;\;  \exists \ell,\, \mathrm{card}(I_\ell \cap \{i,j\})=1\Big\}.$$
       The proof follows the same arguments as the one of  Proposition \ref{prop:Dirichlet}. We omit the details. Note that several results of this 
paper can be extended in an analogous way.
\end{remark}

\section{Discrete models}\label{sec:disc}

In this section, we deal with discrete models, and in particular we have to use  the \BL-condition  in its brute form~\eqref{ineq:L} since we are no longer working with diffusion generators. We nevertheless
provide a simple criterion which can be worked out for a number of discrete models
of interest.

\subsection{Abstract criterion}
Throughout this paragraph, $E$ will thus be a finite or countable state space.
Let $K$ be a Markov kernel on $E$, that is, $K : E \times E \to [0, \infty)$ is such that for every $x \in E$,
$\sum_{y \in E} K(x,y) = 1$. If $f : E \to \rr$ is bounded, set
$Kf(x) = \sum_{y \in E} K(x,y) f(y)$, $x \in E$.
As before, for given maps $ T_i : E \to E_i$, $i = 1, \ldots, m$, we say they commute with $K$ if
for any function $f : E \to \rr $, $K(f \circ T_i)$ is a function of $T_i$.
Again, this amounts to the existence of a Markov kernel $K_i$
on $E_i$ such that $ K(f \circ T_i) = K_i(f) \circ T_i$. This definition is of course
equivalent to abstract one of \S\ref{sec:abs} in terms  of the associated Markov generator  
$$L = K - {\rm Id}.$$

The next proposition provides a simple equivalent
criterion for the \BL-condition~\eqref{ineq:L}  in this context.

\vskip 3mm

\begin{proposition}[\BL-condition in the discrete case]\label{prop:discret}  For distinct $x, y \in E $ such that $K(x,y) >0$, set
$$ I_{x,y} = \big\{ i \in\{1, \ldots , m\} ;\; T_i(x) \not= T_i(y) \big\} .$$
Let $c_i \geq 0$, $i = 1, \ldots, m $. Then the \BL-condition~\eqref{ineq:L}  holds if and only if
\begin{equation}\label{cond:discret}
 \sum_{i \in I_{x,y}} c_i  \leq 1  , \quad \textrm{ for all $ x \not= y$ in $E$ such that $K(x,y) > 0$}. 
\end{equation}
Therefore, under
this condition, for every non-negative functions $f_i : E_i \to \rr$, $i =1, \ldots, m$,
and every $t\geq 0$,
$$ P_t \bigg ( \prod_{i=1}^m f_i^{c_i} \circ T_i\bigg ) 
             \leq  \prod_{i=1}^m \big (P_t (f_i \circ T_i)\big ) ^{c_i} . $$

In particular, if $K$ has an ergodic invariant probability measure $\mu $ and if
 for all $x, y \in E $ distinct with $K(x,y) >0$, it holds $ {\rm card } \,  \{ i = 1, \ldots , m ;\; T_i(x) \not= T_i(y) \}\le p $, then choosing
$c_i = \frac{1}{p}$, $i =1, \ldots, m$, we have that  
$$ \int \prod _{i=1}^m f_i  \circ T_i \, d\mu  \leq  \prod _{i=1}^m \left(\int  (f_i \circ T_i)^p d\mu \right)^{\frac1p}. $$
\end{proposition}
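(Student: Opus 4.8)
The plan is to reduce the \BL-condition~\eqref{ineq:L} to one inequality attached to each transition $K(x,y)>0$, and then to recognise that inequality as an instance of Jensen's inequality. First I would unwind the generator $L=K-\mathrm{Id}$. Using $\sum_y K(x,y)=1$, a direct computation gives, at every point $x$,
$$e^{-H(x)}L(e^H)(x)=\sum_y K(x,y)\big(e^{H(y)-H(x)}-1\big),$$
and likewise $e^{-F_i(x)}L(e^{F_i})(x)=\sum_y K(x,y)\big(e^{F_i(y)-F_i(x)}-1\big)$. Since $H=\sum_i c_iF_i$, the \BL-condition at $x$ becomes $\sum_y K(x,y)\big[e^{H(y)-H(x)}-1-\sum_i c_i(e^{F_i(y)-F_i(x)}-1)\big]\le 0$, so it is enough to control the bracket for each neighbour $y$ separately.

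For the \emph{sufficiency} direction I would fix a pair $x\neq y$ with $K(x,y)>0$ and set $a_i=F_i(y)-F_i(x)$. Because each $F_i$ factors through $T_i$, one has $a_i=0$ whenever $T_i(x)=T_i(y)$, i.e. whenever $i\notin I_{x,y}$; so only the indices $i\in I_{x,y}$ survive and the per-edge inequality reads
$$e^{\sum_{i\in I_{x,y}}c_i a_i}\le \Big(1-\sum_{i\in I_{x,y}}c_i\Big)+\sum_{i\in I_{x,y}}c_i\,e^{a_i}.$$
Assuming \eqref{cond:discret}, the number $1-\sum_{i\in I_{x,y}}c_i$ is non-negative, so the weights $(c_i)_{i\in I_{x,y}}$ together with this slack form a probability vector carried by the values $(a_i)_{i\in I_{x,y}}$ and $0$; convexity of the exponential then yields exactly the displayed bound. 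Summing against $K(x,y)\ge 0$ gives \eqref{ineq:L}.

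For the \emph{necessity} direction I would argue by contraposition: suppose $\sum_{i\in I_{x_0,y_0}}c_i>1$ for some transition $K(x_0,y_0)>0$, and build a test function making \eqref{ineq:L} fail at $x_0$. For $i\in I_{x_0,y_0}$ take $g_i$ equal to a parameter $t>0$ on the value $T_i(y_0)$ and $0$ elsewhere (consistent since $T_i(x_0)\neq T_i(y_0)$), and $F_i\equiv 0$ for the remaining $i$; this is a legitimate function factoring through $T_i$. With $J(y):=\{i\in I_{x_0,y_0}:\ T_i(y)=T_i(y_0)\}$ and $s(y):=\sum_{i\in J(y)}c_i$, one checks $s(y)\le \sum_{i\in I_{x_0,y_0}}c_i=:S$ with equality at $y=y_0$, and the \BL-condition at $x_0$ reduces to $\sum_y K(x_0,y)(e^{ts(y)}-1)\le (e^t-1)\sum_y K(x_0,y)s(y)$. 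All summands on the left are non-negative (so no cancellation between neighbours can occur), whence the left side is at least $K(x_0,y_0)(e^{tS}-1)$ while the right side is at most $S(e^t-1)$; since $S>1$ the term $e^{tS}$ dominates $e^t$ and the inequality breaks for $t$ large. This pleasant absence of cancellation is what lets me localise to a single edge, and setting up this test function correctly is the step I would expect to demand the most care.

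Finally, the ``in particular'' statement is immediate: if $\mathrm{card}\,I_{x,y}\le p$ for every transition, then $c_i=1/p$ satisfies $\sum_{i\in I_{x,y}}c_i\le 1$, so \eqref{cond:discret} holds and Proposition~\ref{prop:abstract} gives the local bound \eqref{ineq:Pt}. Applying it to the functions $f_i^{\,p}$ and letting $t\to\infty$ along the ergodic semigroup, so that $P_t(\cdot)\to\int\cdot\,d\mu$, turns \eqref{ineq:Pt} into \eqref{ineq:mu} and yields $\int\prod_i f_i\circ T_i\,d\mu\le\prod_i\big(\int (f_i\circ T_i)^p\,d\mu\big)^{1/p}$.
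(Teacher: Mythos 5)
Your proof is correct and follows essentially the same route as the paper's: rewriting \eqref{ineq:L} pointwise as a per-edge inequality for $\varphi(u)=e^u-1$ restricted to $i\in I_{x,y}$, using convexity with the slack weight $1-\sum_{i\in I_{x,y}}c_i$ placed at $0$ for sufficiency, and blowing up a test function for necessity. The only (immaterial) difference is your necessity test function $g_i=t\,\1_{\{T_i(y_0)\}}$ keyed to the target $y_0$, where the paper takes $f_i(z)=\theta\,\1_{z\neq T_i(x)}$ keyed to the base point $x$; both localise to a single transition by comparing growth orders as the parameter tends to infinity.
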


\begin{proof}  At fixed $ x \in E$, condition \eqref{ineq:L} may be written as
\begin{equation}\label{ineq:disc2} 
   \sum_{y \in E} K(x,y) \big (  e^{ \sum_{i=1}^m c_i [ f_i \circ T_i (y) - f_i \circ T_i(x)]} -1 \big)
     \leq \sum_{i=1}^m c_i \sum_{y \in E} K(x,y) \big (  e^{f_i \circ T_i (y) -  f_i \circ T_i(x)} - 1 \big). 
 \end{equation}
 The sums over $i$ on both sides only run over $i \in I_{x,y}$ so that the preceding inequality is equivalent to saying that
$$ \sum_{y \in E} K(x,y) \varphi\Big( \sum_{i \in I_{x,y}}c_i [ f_i \circ T_i (y) - f_i \circ T_i(x)]\Big)  
     \leq \sum_{y \in E} K(x,y) \sum_{i \in I_{x,y}} c_i \varphi\Big(f_i \circ T_i (y) -  f_i \circ T_i(x)\Big),$$
     where $\varphi(u)=e^u-1$. Since $\varphi(0)=0$, we can restrict the previous sum over $y\in E\setminus\{x\}$, and of course we can ask that $K(x,y)\neq 0$. Now, for fixed $x,y\in E$ with $x\neq y$ and $K(x,y)\neq 0$, we argue  that the Condition \eqref{cond:discret} on the
$c_i's$ combines with the convexity of $\varphi$ to give (pointwise) the desired inequality.

 Conversely,
if \eqref{ineq:disc2} holds for all choices of $f_i$, $i=1, \ldots,m,$ we choose  $f_i(z)=\theta \1_{z\neq T_i(x)}$
where $ \theta \in \mathbb R^+$. Letting $\theta\to +\infty$ and comparing the orders of the terms in \eqref{ineq:disc2} shows that for each $y\neq x$ with $K(x,y)\neq >0$, we must have $\sum_{i \in I_{x,y}} c_i \leq 1$.
\end{proof}

\begin{remark}[Extension to non-finite settings]
The careful reader has probably noticed that the finiteness (or countability) of $E$ is not central in the argument. All the argument works as soon as we can express $L+I=:K$ in terms of a Markov kernel. Indeed, this allows us to reduce the problem to a pointwise inequality.
\end{remark}

We next illustrate instances of the preceding result. 

\subsection{Examples}

\subsubsection{Homomorphisms of finitely generated groups}
Let for example $G$, $G_i$, $i=1, \ldots , m$, be finite or countable groups and
$T_i : G \to G_i$ be homomorphisms. Let $K$ be a Markov kernel on $G$.
It is clear that each $T_i$ commutes with $K$.

Assume furthermore that $K$ is left-invariant in the sense that 
$K(gx,gy) = K(x,y)$ for all $x,y,g \in G$.
We may let for example $G$ be finitely generated with generating set $S$,
and $K(x,y) = {\rm Card } \, (S)^{-1} {\rm 1}_S(y^{-1}x)$, $x, y \in G$.
Then, condition \eqref{cond:discret} of Proposition~\ref{prop:discret} amounts to
$$ \sum_{i \in I_z} c_i \leq 1 $$
for every $z \in S$ where $I_z = \{ i= 1, \ldots, m; z \notin {\rm Ker} \, (T_i) \}$.

\subsubsection{Coordinates of the symmetric group}  
Let $ E$ be the symmetric group $\mathcal S_n$ over $n$ elements $\{1, \ldots , n\}$,
$n \geq 2$. This set is the discrete analogue of $SO(n)$. Unlike the continuous setting, 
there are several possible choices for the kernel $K$. However in view of the latter proposition,
where each couple $(x,y)$ with $K(x,y)>0$ leads to a linear constraint on the exponents $c_i$, it is 
natural to take a small (or even minimal) generating set $S$ and to consider:
  $$K(x,y)=\frac{1}{\mathrm{card}(S)} \mbox{ if there is } \tau \in S \mbox{ with } y=\tau x.$$
 We choose for $S$ the set of all transpositions.
  The following calculation will 
 show that it is  the best choice, since it minimizes the size of the support $\mathrm{supp}(\tau)=\{j;\;
 \tau(j)\neq j\}$. 
  
The normalized counting measure $\mu$ is invariant for  $K$. Actually $S$ being stable by inverse it is also reversible:
$$\int (Kf)\,  g\, d\mu=\int \frac{1}{\mathrm{card}(S)} \sum_{\tau\in S} f(\tau x)g(x) \, d\mu(x)
=  \int \frac{1}{\mathrm{card}(S)} \sum_{\tau\in S} f(y)g(\tau^{-1}y) \, d\mu(y)=\int (Kg)\, f \, d\mu.$$

 Let $I$ be a subset of $\{1, \ldots, n\}$. We consider  the map $T_I$
  defined  by
$$T_I(x) = x_{|I}=(x(i))_{i \in I}, \quad \forall x\in \mathcal S_n .$$
Then $T_I$ commutes with $K$; indeed
$$K(f\circ T_I)(x)=\frac{2}{n(n-1)} \sum_{\tau\in S} (f\circ T_I)(\tau x)$$
and $T_I(\tau x)=(\tau \circ x)_{|I}=\tau \circ x_{|I}$ depends only on $T_I(x)$.
The result of Proposition~\ref{prop:discret} involves the condition $T_I(x)\neq T_I(y)$ for $K(x,y)>0$.
Let us formulate it in a more concrete manner:
$$T_I(x)\neq T_I(\tau x) \Longleftrightarrow  \exists\,  i\in I,\; x(i)\neq \tau x(i) 
\Longleftrightarrow I \cap x^{-1} (\mathrm{supp}(\tau)) \neq \emptyset.$$
Note that since the proposition involves this condition for all $x\in \mathcal S_n$,
 the set $x^{-1}(\mathrm{supp}(\tau))$
can be any set with the size of the support of $\tau$. Choosing transpositions then clearly appears as the 
most economical choice.  

For  $I\subset \{1, \ldots, n\}$   we may also consider   the map $R_I$  defined by
$$R_I(x) = x(I)=\{x(i),\; i \in I\}, \quad \forall x\in \mathcal S_n .$$
Then $R_I$ also commutes with $K$ and for any $x$ and any transposition $\tau$,
$R_I(x)\neq R_I(\tau x)$ happens if and only if $\tau$ moves one point in $x(I)$ outside $x(I)$. Hence 
 $$R_I(x)\neq R_I(\tau x) \Longleftrightarrow  \mathrm{card}\big (I \cap x^{-1} (\mathrm{supp}(\tau))\big)=1.$$
  Combining these observations with Proposition~\ref{prop:discret} yields a discrete analogue to Proposition~\ref{prop:coordSOn}:
\begin{proposition} \label{prop:coordSn} Let $\mathcal I$ be a collection of subsets of  $\{1,\ldots,n\}$. Assume that it is written
as a disjoint union $\mathcal I=\mathcal I_1\cup \mathcal I_2$. 
For each nonempty subset $I\in \mathcal I$,  let  $c_I\ge 0$ 
and $f_I:\mathcal S_n\to \R^+$ such that 
  \begin{itemize}
        \item if $I\in \mathcal I_1$ then  for all $x$, $f_I(x)$ only depends  on $x_{|I}$, 
         \item if $I\in \mathcal I_2$ then for all $x$, $f_I(x)$ only depends on $x(I)$.
\end{itemize}
If for all $1\le i, j\le n$ with $i\neq j$ it holds:
  $$\displaystyle  \sum_{\stackrel{I\in \mathcal I_1}{I\cap\{i,j\}\neq\emptyset}} c_I
    +  \sum_{\stackrel{I\in \mathcal I_2}{ \mathrm{card}(I\cap\{i,j\})=1}} \!\!\!c_I\le 1,$$
  then the \BL-condition\eqref{ineq:L} is satisfied and 
  $$ \int_{\mathcal S_n} \prod_{I\in\mathcal I}  f_I^{c_I} d\mu \le \prod_{I\in\mathcal I}  \left(\int_{\mathcal S_n} f_I\, d\mu\right)^{c_I}.$$ 
\end{proposition}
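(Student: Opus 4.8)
The plan is to read off the claim as a direct application of the discrete criterion of Proposition~\ref{prop:discret} on $E=\mathcal S_n$, equipped with the Markov kernel $K$ attached to the generating set $S$ of all transpositions. For $I\in\mathcal I_1$ I would use the restriction map $T_I:x\mapsto x_{|I}$, and for $I\in\mathcal I_2$ the image map $R_I:x\mapsto x(I)$; both commute with $K$, as already recorded above, so the abstract framework of \S\ref{sec:abs} applies and it suffices to verify the combinatorial condition~\eqref{cond:discret} for the coefficients $(c_I)_{I\in\mathcal I}$.

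First I would unwind the index set $I_{x,y}$ of Proposition~\ref{prop:discret} along an edge. Since $K(x,y)>0$ precisely when $y=\tau x$ for some transposition $\tau$, and since $x^{-1}(\mathrm{supp}(\tau))=\{x^{-1}(a),x^{-1}(b)\}$ is a pair $\{i,j\}$ of distinct indices (when $\tau$ swaps $a,b$), the two characterizations established in the text give $T_I(x)\neq T_I(\tau x)\Leftrightarrow I\cap\{i,j\}\neq\emptyset$ for $I\in\mathcal I_1$, and $R_I(x)\neq R_I(\tau x)\Leftrightarrow \mathrm{card}(I\cap\{i,j\})=1$ for $I\in\mathcal I_2$. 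Hence the edge-constraint $\sum_{I\in I_{x,y}}c_I\le 1$ reads exactly
$$\sum_{I\in\mathcal I_1,\, I\cap\{i,j\}\neq\emptyset} c_I \;+\; \sum_{I\in\mathcal I_2,\, \mathrm{card}(I\cap\{i,j\})=1} c_I \;\le\; 1,$$
which is the hypothesis for the pair $\{i,j\}$. The one point that genuinely needs an argument — and which I regard as the main (if mild) obstacle — is that, because Proposition~\ref{prop:discret} quantifies over \emph{all} edges, the pair $\{i,j\}=x^{-1}(\mathrm{supp}(\tau))$ sweeps out \emph{every} two-element subset of $\{1,\dots,n\}$ as $x$ ranges over $\mathcal S_n$ and $\tau$ over $S$: given any $\{i,j\}$, choose $x$ sending $i,j$ to two fixed values and let $\tau$ swap those values. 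Consequently the family of edge-constraints of Proposition~\ref{prop:discret} coincides with the family of pair-constraints in the hypothesis, so the hypothesis is equivalent to~\eqref{cond:discret} and the \BL-condition~\eqref{ineq:L} holds.

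Finally, the local inequality~\eqref{ineq:Pt} follows from Proposition~\ref{prop:abstract} (equivalently, from the last display of Proposition~\ref{prop:discret}), and I would conclude by letting $t\to\infty$ in~\eqref{ineq:mu}: the normalized counting measure $\mu$ is invariant, and since $S$ generates $\mathcal S_n$ the continuous-time semigroup $P_t=e^{t(K-\mathrm{Id})}$ satisfies $P_tf\to\int f\,d\mu$ pointwise, yielding the stated inequality. Here I would flag one technical subtlety worth a remark: the transposition walk is periodic in discrete time (it alternates between even and odd permutations), so ergodicity must be invoked for the \emph{continuous-time} semigroup $P_t$ rather than for the kernel $K$ itself; irreducibility of the Cayley graph is what guarantees convergence. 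Apart from this and the surjectivity bookkeeping of the previous paragraph, the proof is a transcription of the already-developed $\mathcal S_n$ dictionary into the criterion of Proposition~\ref{prop:discret}.
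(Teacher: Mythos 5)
Your proposal is correct and follows essentially the same route as the paper, which proves Proposition~\ref{prop:coordSn} by combining the commutation of $T_I$ and $R_I$ with the transposition kernel, the characterizations $T_I(x)\neq T_I(\tau x)\Leftrightarrow I\cap x^{-1}(\mathrm{supp}(\tau))\neq\emptyset$ and $R_I(x)\neq R_I(\tau x)\Leftrightarrow \mathrm{card}\bigl(I\cap x^{-1}(\mathrm{supp}(\tau))\bigr)=1$, and the criterion of Proposition~\ref{prop:discret}; the paper even records your surjectivity point, noting that $x^{-1}(\mathrm{supp}(\tau))$ ranges over all pairs as $x$ varies. Your extra remark on invoking ergodicity for the continuous-time semigroup (irreducibility sufficing despite the parity periodicity of the discrete walk) is a sound clarification of a detail the paper leaves implicit.
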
  
  The examples given after Proposition~\ref{prop:coordSOn} transfer to $\mathcal S_n$. For a family $\mathcal I$  of subsets of  $\{1,\ldots,n\}$, introduce the exponents:
  $$p = \max_{i\neq j} \mathrm{card}\big( \{ I \in \mathcal I  ; \ i\in I, \textrm{ or } j \in I\} \big)  \quad \textrm{and}\quad q= \max_{i\neq j} \mathrm{card}\big( \{ I \in \mathcal I  ; \ \mathrm{card}(I\cap\{i,j\})=1\} \big)  .$$
  Then, for functions $g_I$ and $h_I$ defined on suitable sets, we have 
  \begin{eqnarray*}
\int_{\mathcal S_n} \prod_{I\in \mathcal I} g_I(\sigma_{|I}) \, d\mu(\sigma)&\le& \prod_{I\in\mathcal I} \left(\int_{\mathcal S_n}  g_I(\sigma_{|I}) ^p \, d\mu(\sigma)
\right)^{\frac1p},\\
\int_{\mathcal S_n} \prod_{I\in \mathcal I} h_I(\sigma(I) ) \, d\mu(\sigma)&\le &\prod_{I\in\mathcal I} \left(\int_{\mathcal S_n}  h_I(\sigma(I))^q \, d\mu(\sigma)
\right)^{\frac1q}.
\end{eqnarray*}
A particular case of interest (where these two cases coincide) is when $\mathcal I =\big\{ \{1\}, \ldots, \{n\} \big\}$. Then, $p=q=2$ and we recover  the inequality on permanents given in \cite{C-L-L2}.

\subsubsection{Slices of the discrete cube and multivariate hypergeometric distributions}
For $n \geq k\geq 0$, let
$$ \Omega _{n,k} = \big \{ x \in \{0,1\}^n; x_1+ \cdots + x_n = k \big \}$$
equipped with uniform measure. These sets are discrete analogues of the sphere $S^{n-1}$.
Two elements $x,y$ in $\Omega _{n,k}$ are neighbors if and only   if they 
differ on exactly two coordinates, a relation written as $x\sim y$. Let $K$ be the nearest neighbor random
walk on $\Omega _{n,k}$ (known as the Bernoulli-Laplace model) defined by
$$ Kf (x) = \frac{1}{ k(n-k)} \sum_{y \sim x} f(y) . $$
It is easy to check that $Kf (x)$ only depends on the $i$'th coordinate $x_i$ of $x$ if
this is the case for $f$. Indeed, the number of neighbors $y$ of $x$
such that $y_i = x_i$ is equal to $(k-x_i) (n-1-k +x_i)$, whereas when
$y_i = 1- x_i$, this number is equal to the number of coordinates
$x_j$, $j \not= i$, such that $x_j = 1 -x _i$. For the coordinate maps
$T_i(x) = x_i$, $1 \leq i\leq n$, we are thus in the preceding setting of
commuting operators so that Proposition~\ref{prop:discret}  applies with $p=2$.

Alternatively one can use the following observation, which was pointed out to us by  P. Caputo. The uniform probability measure on $\Omega_{n,k}$ is the image of the uniform probability
measure on the permutation group $\mathcal S_n$ by the map $ x \in \mathcal S_n \mapsto ({\bf 1}_{x (i) \leq k})_{1\leq i\leq n}$. Consequently  the correlation inequalities derived on $\mathcal S_n$ for functions
depending on blocks of coordinates pass to $\Omega_{n,k}$ to yield the same result.
Such a reasoning may be extended in order to encompass more general distributions.
Consider integer numbers $K\le M$  and  $m=(m_i)_{1\le i\le n}$ such that
$\sum_i m_i=M$. The multivariate hypergeometric distribution $\mathcal H(m,K)$
is defined on $\mathbb N^n$ by 
$$ \mathcal H(m,K)(\{(k_1,\ldots,k_n)\})=\frac{\prod_{i=1}^m {m_i \choose k_i}}{{M \choose K}}$$
if $k_1+\cdots+k_n=K$ and for all $i$, $k_i\le m_i$ and $ H(m,K)(\{(k_1,\ldots,k_n)\})=0$ otherwise.
Given an urn containing $M$ balls of $n$ different colors, and more precisely $m_i$ of the $i^{\mathrm{th}}$ color,
if one draws $K$ balls (uniformly) at random then the $n$-tuple $(X_1,\ldots,X_n)$ consisting of the numbers 
of balls of each color in the sample is $\mathcal H(m,K)$-distributed.
It is not hard to check that $\mathcal H(m,K)$ coincides with the image of the uniform probability law on
the permutation group $\mathcal S_M$ by the map
$$ \sigma \in\mathcal S_M \mapsto T(\sigma):=\left(\mathrm{card}\Big\{j \in \Big[1+ \sum_{\ell\le i-1}m_\ell,\sum_{\ell\le i} m_\ell\Big];\;  \sigma(j)\le K\Big\}\right)_{i=1}^n.$$
This observation can be used to show that Proposition~\ref{prop:Dirichlet} remains valid if one
replaces  the Dirichlet laws by   multivariate 
hypergeometric distributions. We only outline the proof. 
Starting from functions $f_I$ defined on the 
support of $\mathcal H(m,K)$, we consider the functions $g_I:=f_I\circ T$.   Note that $g_I(\sigma)$ depends on the images by $\sigma$ of several intervals  of $\{1,\ldots,M\}$. Applying Proposition~\ref{prop:coordSn}
directly would not give the right result, since it only deals with simpler forms of dependences.
 Hence we  need to go back to Proposition~\ref{prop:discret}, in the spirit of the proof of Proposition~\ref{prop:coordSn} (this is actually
related to Remark~\ref{rem:general}). We omit the details.

\subsubsection{Product spaces and Finner's theorem} \label{sssec:finner}
Let us go back to more general distributions (including continuous distributions on non-finite spaces) but in the context of product structures.
The hypotheses in  Propositions~\ref{prop:coordSOn}, \ref{prop:coordSn} or \ref{prop:discret} are reminiscent of Finner's theorem
 \cite{F} which expresses that if $E = X_1 \times \cdots \times X_n$
is a product space with product probability measure $\mu = \nu _1 \otimes \cdots \otimes \nu _n$,
and if,   for $i = 1, \ldots, m$, $T_i : E \to E_i$ is the coordinate  projection on the space
$E_i := \prod_{j \in S_i} X_j $ determined by $S_i \subset \{1, \ldots , n\}$, then for any
non-negative functions $f_i : E_i \to \rr$, $i = 1, \ldots, m$,
$$ \int \prod _{i=1}^m f_i ^{c_i} \circ T_i \, d\mu   
         \leq  \prod _{i=1}^m \bigg ( \int f_i \circ T_i d\mu \bigg )^{c_i} $$
provided that
$$ \sum_{i;  S_i \ni j} c_i \leq  1 \quad {\hbox {for every}} \; \;  j = 1, \ldots, n. $$
This statement is actually contained in Proposition~\ref{prop:discret} for a suitable choice of
the kernel $K$. Without loss of generality, we may assume that, for each $i$, $X_i $
is a finite set equipped with a probability measure $\nu_i $ that charges all points.
Consider the kernels $K_i$ on $X_i$ given by $K_i(x_i,y_i) = \nu_i (y_i)$, and
tensorize them to the product space $E = X_1 \times \cdots \times X_n$ by
$$ {\cal K} = \frac{1}{n} \sum_{i=1}^n \tilde{I} \otimes \cdots \otimes \tilde{I}
             \otimes K_i \otimes \tilde{I} \otimes \cdots \otimes \tilde{I} $$
where $\tilde{I}$ is defined on $E_j$ by $\tilde{I}(x_j,y_j) = 1_{x_j =  y_j}$ (in other words, the associated Markov operator is the identity).
The commutation property of the projection operators $T_i$
is obvious. Moreover, for distinct elements $x,y $ in $E$,
${\cal K} (x,y) >0$ if and only if $x = (x_1, \ldots, x_n) $ and
$y = (y_1, \ldots, y_n) $ differ at exactly one coordinate, say $j$. Now
the set of $i$'s such that $T_i(x) \not= T_i(y)$ is exactly the set
of $i$'s such that $S_i \ni j$.

In particular, the preceding kernel provides a proof of the classical
H\"older inequality on the finite space $X$ equipped with the
probability measure $\nu $, and by approximation on any finite measure space.

\section{Sums of squares}\label{sec:square}

In this short paragraph, we briefly illustrate how the ideas developed in the preceding
discrete setting may also be of interest for classes of diffusion generators. Assume the generator $L$ is a sum 
of squares of vector fields  on a manifold $E$, 
$$L = \sum_{\ell } X_\ell^2.$$
 Let for example
$T_i : E \to \rr^{k_i}$, $i=1, \ldots, m$, be  commuting (with $L$) maps. We interpret $X_\ell T_i$ coordinate by coordinate.
The criterion put forward in Proposition~\ref{prop:discret} then
adapts to this setting:
\begin{proposition} For every $\ell$, let
$ I_\ell :=  \big\{ i\in \{1, \ldots , m\} ; X_\ell T_i \not= 0  \big\} $.
Let $c_i \geq 0$, $i = 1, \ldots , m$, be such that
$$ \sum_{i \in I_\ell } c_i \leq  1 \quad {\hbox {for every}} \; \; \ell . $$
Then, for every non-negative functions $f_i : E_i \to \rr$, $i =1, \ldots, m$,
and every $t \geq 0$,
$$ P_t \bigg ( \prod _{i=1}^m f_i^{c_i} \circ T_i \bigg )
                  \leq  \prod _{i=1}^m \big( P_t ( f_i \circ T_i  )\big) ^{c_i} . $$
                 In particular, if for all $\ell$, $  {\rm card } \, \{ i = 1, \ldots , m ; X_\ell T_i\not= 0 \}\le p$, we may choose $c_i = \frac{1}{p}$, $i =1, \ldots, m$.
\end{proposition}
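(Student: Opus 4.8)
The plan is to verify the \BL-condition in its brute form~\eqref{ineq:L} and then invoke Proposition~\ref{prop:abstract} to obtain the semigroup inequality. Since $L$ is a diffusion generator (a sum of squares of vector fields), I expect it will be cleanest to work through Fact~\ref{BLdiffusion}: the \BL-condition is equivalent to the pointwise \emph{carr\'e du champ} inequality~\eqref{cond:Gamma}, namely that for all functions $f_i:E_i\to\rr$,
$$ \Gamma\Big(\sum_{i=1}^m c_i\,f_i\circ T_i\Big) \le \sum_{i=1}^m c_i\,\Gamma(f_i\circ T_i). $$
So the first step is to compute $\Gamma$ for a sum-of-squares generator. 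A direct calculation from the definition~\eqref{eq:carreduchamp} gives $\Gamma(f,g)=\sum_\ell (X_\ell f)(X_\ell g)$, since the first-order terms in $L(fg)-fLg-gLf$ cancel and each $X_\ell^2$ contributes $2(X_\ell f)(X_\ell g)$. In particular $\Gamma(f)=\sum_\ell (X_\ell f)^2$.

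With this formula in hand, the inequality~\eqref{cond:Gamma} becomes, at each point,
$$ \sum_\ell \Big(\sum_{i=1}^m c_i\,X_\ell(f_i\circ T_i)\Big)^2 \le \sum_{i=1}^m c_i\sum_\ell \big(X_\ell(f_i\circ T_i)\big)^2. $$
The second step is to observe that this decouples over $\ell$: it suffices to prove, for each fixed $\ell$, the scalar inequality
$$ \Big(\sum_{i=1}^m c_i\,X_\ell(f_i\circ T_i)\Big)^2 \le \sum_{i=1}^m c_i\big(X_\ell(f_i\circ T_i)\big)^2. $$
Here the key structural input is the set $I_\ell=\{i: X_\ell T_i\neq 0\}$. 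Writing $a_i:=X_\ell(f_i\circ T_i)$, I would note that by the chain rule $a_i=0$ whenever $X_\ell T_i=0$ (interpreting $T_i$ coordinate by coordinate), so both sums effectively run only over $i\in I_\ell$. Then the hypothesis $\sum_{i\in I_\ell}c_i\le 1$ together with the convexity of $t\mapsto t^2$ (or equivalently Jensen / Cauchy--Schwarz applied to the sub-probability weights $c_i$) yields $\big(\sum_{i\in I_\ell}c_i a_i\big)^2\le \big(\sum_{i\in I_\ell}c_i\big)\sum_{i\in I_\ell}c_i a_i^2\le \sum_{i\in I_\ell}c_i a_i^2$, which is exactly the claim.

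Summing over $\ell$ recovers~\eqref{cond:Gamma}, hence the \BL-condition holds, and Proposition~\ref{prop:abstract} gives the stated semigroup bound~\eqref{ineq:Pt} for every $t\ge0$. The final clause, that $c_i=1/p$ is admissible when $\mathrm{card}(I_\ell)\le p$ for all $\ell$, is immediate since then $\sum_{i\in I_\ell}c_i\le p\cdot\frac1p=1$. I do not anticipate a serious obstacle here: the only point requiring a little care is the correct interpretation of $X_\ell T_i$ and $X_\ell(f_i\circ T_i)$ when $T_i$ is vector-valued, and the justification that $X_\ell T_i=0$ forces $X_\ell(f_i\circ T_i)=0$ via the chain rule (each component of the gradient of $f_i$ pairs against the corresponding component of $X_\ell T_i$). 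This is the direct analogue of the mechanism in Proposition~\ref{prop:discret}, where the constraint came from the set of indices $i$ on which a transition $x\to y$ changes $T_i$; here the ``transition directions'' are replaced by the vector fields $X_\ell$.
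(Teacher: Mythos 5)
Your proposal is correct and follows essentially the same route as the paper's proof: both reduce the \BL-condition via Fact~\ref{BLdiffusion} to the pointwise inequality $\sum_\ell (X_\ell H)^2 \le \sum_i c_i \sum_\ell (X_\ell(f_i\circ T_i))^2$, decouple over $\ell$, use the chain rule $X_\ell(f_i\circ T_i)=\langle X_\ell T_i,\nabla f_i(T_i)\rangle$ to restrict the sums to $i\in I_\ell$, and conclude by convexity of the square under the sub-probability weights $c_i$. Your only addition is the explicit verification that $\Gamma(f,g)=\sum_\ell (X_\ell f)(X_\ell g)$ for a sum-of-squares generator, which the paper states without proof.
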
 
\begin{proof}
Since $\Gamma(f)=\sum_\ell (X_\ell f)^2$, according to Fact~\ref{BLdiffusion}, the  
 \BL-condition
\eqref{ineq:L} takes the form 
\begin{equation}\label{cond:squares}
  \sum_{\ell} (X_\ell H)^2 \le \sum_{i=1}^{m} c_i \sum_\ell \big(X_\ell (f_i\circ T_i) \big)^2,
                \end{equation}     
where we recall that $H=\sum_{i=1}^{m} c_i f_i\circ T_i$.
Hence we are done if we can prove that for every $\ell$,
\begin{equation}\label{eq:claim-carre}
 \Big(\sum_{i=1}^{m} c_i  X_\ell (f\circ T_i) \Big)^2 \le \sum_{i=1}^{m} c_i \big( X_\ell (f\circ T_i) \big)^2.
\end{equation}
If $f_i$ is a function on $\rr^{k_i}$, then
$X_\ell (f_i\circ T_i) = \langle X_\ell T_i, \nabla f _i (T_i) \rangle$ is zero when $i\not\in I_\ell$.
Hence the summations in the above inequality only hold on $i\in I_\ell$.
Since, by hypothesis $\sum_{i\in I_\ell} c_i\le 1$, Inequality~\eqref{eq:claim-carre} is valid by convexity of the 
square function. The conclusion follows.
\end{proof}
We illustrate this result in the context of the Loomis-Whitney inequalities on the sphere. Consider
$$ \Delta =  \frac12 \sum_{k,\ell} X_{k\ell }^2 = 
          \frac12 \sum_{k,\ell} [ x_k \partial _\ell - x_\ell \partial _k ]^2 $$
the Laplace operator on the sphere $S^{n-1} \subset \rr^n$.
Let $A$ be a subset of $\{1, \ldots, n\}$ with $d$ elements,
and consider $T : \rr^n \to \rr^d$ defined by $T(x) = (x_i)_{i \in A}$. Then
$X_{k\ell } T_A = 0$ if and only if $\{k,\ell \} \cap A = \emptyset$. Thus, for every $k, \ell$, 
\begin{eqnarray*}  p   &=& {\rm card} \,  \big \{ A, |A| = d ; X_{k,\ell} T_A\not= 0  \big \} \\
          &=&  {n \choose d} - {n-2 \choose d} \\
          & =& {{n-1} \choose {d-1}} + {{n-2} \choose {d-1}} . 
\end{eqnarray*}
One instance of application is $d=1$ (for which $p=2$) from which we recover inequality \eqref{eq:BLsphere}
involving functions of $T_i(x)=x_i$. The approach here is indeed very close to the one of Carlen, Lieb and Loss~\cite{C-L-L1}.

\begin{remark}
   This viewpoint best explains the analogy between the results on $SO(n)$ and $\mathcal S_n$. Indeed the 
  infinitesimal rotation $ x_k \partial _\ell - x_\ell \partial _k$ in $\mathrm{vect}(e_k,e_\ell)$ is the 
  analogue of the transposition $\tau_{k,\ell}$.
\end{remark}

\section{Entropy of marginals}

In this section, we investigate, from the abstract Markov operator point of view, descriptions of the Brascamp-Lieb inequalities and entropy inequalities for marginals following \cite{C-L-L1,C-CE}. As in Section~\ref{sec:abs}, we do not make precise the classes of functions under consideration.

Let $(E,\mu)$ be a probability space and $T_i : E \to E_i$ be measurable maps.                          
Given a probability density $f$ on $E$ with respect
to $\mu $, denote by $f_i$ its conditional expectation with
respect to $T_i$. In other words, $f_i$ is the unique probability density on $E$ with respect to $\mu$ such that, for every bounded measurable
$\varphi : E_i \to \rr$,
\begin{equation}\label{eq:conditional}
 \int f \, \varphi \circ T_i \, d\mu  = \int f_i  \, \varphi \circ T_i \, d\mu .  
 \end{equation}
(Since $f_i = h_i \circ T_i$ for some $h_i : E_i \to \rr$, $h_i$ may be thought of as the ``marginal" of
$f$ in the direction of $T_i$.)
As shown in \cite{C-L-L1}, the Brascamp-Lieb inequality \eqref{ineq:mu} may be used, by standard arguments, to prove
the entropy inequality for the probability density $f$
\begin{equation}\label{ineq:ent}
 \sum_{i=1}^m c_i \int \! f_i  \log f_i  d\mu
       \leq  \int \! f \log f d\mu . 
       \end{equation} 
A recent work by    Carlen
and   Cordero-Erausquin \cite{C-CE}  shows  that there is a full equivalence: 

\begin{proposition} The following are equivalent.

(i) For every non-negative functions $g_i : E_i \to \rr$, $i = 1, \ldots , m$,
$$ \int  \prod_{i=1}^m g_i^{c_i} \circ T_i d \mu  
                \leq  \prod_{i=1}^m  \bigg ( \int g_i \circ T_i d\mu  \bigg ) ^{c_i}. $$
                
(ii) For every probability density $f$ with respect to $\mu $,
$$ \int f \log f d\mu  \geq \sum_{i=1}^m c_i \int f_i \log f_i d\mu .$$
\end{proposition}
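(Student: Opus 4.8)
The plan is to read the equivalence as an instance of the convex (Legendre) duality between relative entropy and the log-Laplace functional, the bridge between the two sides being the defining property \eqref{eq:conditional} of the conditional densities $f_i$. The one analytic input I would isolate at the outset is the Gibbs variational principle: for every measurable $\psi:E\to\rr$ and every probability density $f$ with respect to $\mu$,
\begin{equation*}
 \int \psi\, f\, d\mu - \log\!\int e^{\psi}\, d\mu \;\le\; \int f\log f\, d\mu ,
\end{equation*}
together with its dual form $\log\int e^{\psi}\,d\mu = \sup_f\big\{\int \psi f\,d\mu - \int f\log f\,d\mu\big\}$, the supremum running over probability densities and being attained at $f=e^{\psi}/\!\int e^{\psi}d\mu$. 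I will use throughout that, since $\phi\circ T_i$ is $T_i$-measurable, \eqref{eq:conditional} gives $\int(\phi\circ T_i)\,f\,d\mu=\int(\phi\circ T_i)\,f_i\,d\mu$, and that each $f_i$ is again a probability density (so $\int f_i\,d\mu=1$).

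For the implication (i)$\,\Rightarrow\,$(ii) I would apply the Gibbs inequality to the test function $\psi=\sum_i c_i\log f_i$ (with $f_i=h_i\circ T_i$, so $\psi=\sum_i c_i(\log h_i)\circ T_i$). The linear term becomes $\sum_i c_i\int(\log f_i)\,f\,d\mu=\sum_i c_i\int f_i\log f_i\,d\mu$ after replacing $f$ by $f_i$ via \eqref{eq:conditional}, while the log-Laplace term is $\log\int\prod_i f_i^{c_i}\,d\mu=\log\int\prod_i h_i^{c_i}\circ T_i\,d\mu$, which by hypothesis (i) applied to $g_i=h_i$ is at most $\log\prod_i\big(\int f_i\,d\mu\big)^{c_i}=0$. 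Combining the two gives $\int f\log f\,d\mu\ge\sum_i c_i\int f_i\log f_i\,d\mu-\log\int\prod_i f_i^{c_i}\,d\mu\ge\sum_i c_i\int f_i\log f_i\,d\mu$, which is exactly \eqref{ineq:ent}.

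For the converse (ii)$\,\Rightarrow\,$(i), by homogeneity I may normalize $\int g_i\circ T_i\,d\mu=1$ and then must show $\int\prod_i g_i^{c_i}\circ T_i\,d\mu\le1$. Using the dual formula with $\psi=\sum_i c_i(\log g_i)\circ T_i$, it is enough to prove that $\sum_i c_i\int(\log g_i\circ T_i)\,f\,d\mu\le\int f\log f\,d\mu$ for every probability density $f$. Replacing $f$ by $f_i$ in each summand via \eqref{eq:conditional} and applying the Young form of the Gibbs inequality factor by factor, $\int(\log g_i\circ T_i)\,f_i\,d\mu\le\int f_i\log f_i\,d\mu+\log\int g_i\circ T_i\,d\mu=\int f_i\log f_i\,d\mu$, so the left side is at most $\sum_i c_i\int f_i\log f_i\,d\mu$, which is bounded by $\int f\log f\,d\mu$ precisely by hypothesis (ii). Taking the supremum over $f$ yields $\log\int\prod_i g_i^{c_i}\circ T_i\,d\mu\le0$, as desired.

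The two applications of Gibbs' inequality and the bookkeeping of normalizations are routine; the real obstacle is analytic rather than conceptual. The test functions $\log f_i$ and $\log g_i$ need not be integrable, since the densities may vanish or fail to be bounded away from $0$ and $\infty$, so the variational identities must be justified on the relevant class of functions. I would handle this by first treating $g_i$ bounded away from $0$ and $\infty$, then passing to the general case by truncation together with monotone and dominated convergence, discarding the sets where $f_i=0$. As is stated explicitly elsewhere in the paper, these domain questions are left implicit here.
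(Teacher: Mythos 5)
Your proof is correct. Note that the paper itself gives no proof of this proposition: it is stated as a result of Carlen and Cordero-Erausquin \cite{C-CE}, with the remark that the direction (i) $\Rightarrow$ (ii) follows ``by standard arguments'' from \cite{C-L-L1}. Your argument is exactly that standard route, carried out for both directions: the Gibbs/Legendre duality between relative entropy and the log-Laplace functional, with the defining property \eqref{eq:conditional} of the conditional densities used to trade $\int (\log f_i)\, f\, d\mu$ for $\int f_i \log f_i\, d\mu$ (and likewise with the test functions $(\log g_i)\circ T_i$); this is in substance the duality proof of the cited reference. The details check out, including the normalization $\int f_i\, d\mu = 1$ (take $\varphi \equiv 1$ in \eqref{eq:conditional}), the homogeneity reduction to $\int g_i\circ T_i\, d\mu = 1$ in (ii) $\Rightarrow$ (i), and the implicit fact that $f$ vanishes a.e.\ on the $T_i$-measurable set $\{f_i=0\}$, so that $\psi=\sum_i c_i\log f_i$ is finite $f\,d\mu$-a.e.; your treatment of integrability by truncation is at the same level of rigor the paper adopts for itself. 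One point of contrast worth knowing: the surrounding section pursues a genuinely different, semigroup route to the entropy inequality (ii), namely superadditivity of the Fisher information (Theorem~\ref{theo:subinfo}) integrated along the semigroup via \eqref{eq:ent-semigroup}; but that argument starts from the \BL-condition \eqref{ineq:L} rather than from (i), so it complements rather than replaces the duality argument, which remains the only available proof of the equivalence itself, in particular of (ii) $\Rightarrow$ (i).
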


Since semigroup proofs are available for Brascamp-Lieb inequalities, it is natural to 
hope for semigroup proofs of entropy inequalities. Such an approach was suggested in
  \cite{B-CE-M} for spherical measures,
   on the basis of the corresponding inequality for the Fisher information. 
  In the remainder of this section, we discuss the extension of this argument to the abstract 
  framework.
  
  Let $L$ be   a Markov generator on $E$ with semigroup $\Pt$. We require that $L$ be invariant, symmetric and ergodic for $\mu$. Denote by $\Gamma $  the carr\'e du champ operator of $L$ as defined in~\eqref{eq:carreduchamp}. Hence, the Dirichlet form is expressed as follows
  $$ \mathcal E(f,g)=\int \Gamma(f,g)\, d\mu=-\int f\, Lg\, d\mu=- \int g\, Lf\, d\mu.$$
  It is classical that, under suitable domain assumptions,
\begin{equation}\label{eq:ent-semigroup}  
 \int \! f \log f d\mu = \int_0^\infty \! dt \int  \Gamma (P_t f,\log P_t f)  \, d\mu.
 \end{equation}
 The Fisher information of a function $f>0$ is defined by
    $$J(f):=\mathcal E(f,\log f).$$
 The  above equality \eqref{eq:ent-semigroup} becomes 
 $$ \int \! f \log f d\mu = \int_0^\infty  J(P_t f)\,  dt $$
 and so, in view of the  commutation between $T_i$ and $G$, which ensures that 
 $$P_t (f_i) = (P_t f)_i,$$
we see hat the entropy inequality \eqref{ineq:ent} may be derived from its analogue for the Fisher information,

   The next result shows that such inequality for Fisher information can indeed be derived directly  from the \BL-condition in our  abstract setting. In view of the previous discussion, this therefore provides a different route for proving Brascamp-Lieb inequalities. 

 \begin{theorem}[Superadditivity of Fisher information]\label{theo:subinfo}
   Assume that $L$  is a Markov generator on $E$ which commutes with the maps $T_i  $  and that the \BL-condition~\eqref{ineq:L}  holds. Then,
for every probability density $f$ on $E$ with respect to $\mu $, under the preceding
notation,
\begin{equation}\label{eq:subinfo}
 \sum_{i=1}^m c_i \, J(f_i) \leq  J(f). 
 \end{equation}
\end{theorem}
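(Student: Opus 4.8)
The plan is to reduce the claimed inequality to a single Dirichlet-form inequality and then to establish that inequality by a completing-the-square argument fed by the \BL-condition. Write $\mathcal E(a,b)=\int\Gamma(a,b)\,d\mu=-\int a\,Lb\,d\mu$, so that $J(f)=\mathcal E(f,\log f)$. The preliminary step, valid for a general Markov generator, is the identity
$$\sum_{i=1}^m c_i\,J(f_i)=\mathcal E(H,f),\qquad H:=\sum_{i=1}^m c_i\log f_i.$$
Indeed, since $L$ commutes with $T_i$ and is $\mu$-symmetric it commutes with the conditional expectation with respect to $T_i$, i.e. $L f_i$ is the conditional expectation of $Lf$ given $T_i$ (the infinitesimal form of $P_tf_i=(P_tf)_i$). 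As $\log f_i$ is $T_i$-measurable, $\int\log f_i\,Lf_i\,d\mu=\int\log f_i\,Lf\,d\mu$ by~\eqref{eq:conditional}, whence $J(f_i)=\mathcal E(\log f_i,f_i)=\mathcal E(\log f_i,f)$; summing against $c_i$ gives the identity. Therefore \eqref{eq:subinfo} is equivalent to
$$\mathcal E\big(\log f-H,\;f\big)\ \ge\ 0.$$

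In the diffusion case (Fact~\ref{BLdiffusion}) I would prove this by expanding the manifestly nonnegative quantity $\int f\,\Gamma(\log f-H)\,d\mu\ge0$. Using the chain rule $\Gamma(f,w)=f\,\Gamma(\log f,w)$ one reads off the three terms: $\int f\,\Gamma(\log f)\,d\mu=J(f)$; next $\int f\,\Gamma(\log f,H)\,d\mu=\mathcal E(f,H)=\sum_i c_iJ(f_i)$ by the identity above; and, crucially, the \BL-condition in the form~\eqref{cond:Gamma} gives the pointwise bound $\Gamma(H)\le\sum_i c_i\,\Gamma(\log f_i)$, so that integrating against $f\ge0$ and using that $\Gamma(\log f_i)$ is $T_i$-measurable (hence $\int f\,\Gamma(\log f_i)\,d\mu=\int f_i\,\Gamma(\log f_i)\,d\mu=J(f_i)$) yields $\int f\,\Gamma(H)\,d\mu\le\sum_i c_iJ(f_i)$. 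Assembling these,
$$0\le\int f\,\Gamma(\log f-H)\,d\mu=J(f)-2\sum_{i}c_iJ(f_i)+\int f\,\Gamma(H)\,d\mu\le J(f)-\sum_{i}c_iJ(f_i),$$
which is precisely the desired inequality.

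The main obstacle is the genuinely non-diffusion case, where the chain rule $\Gamma(f,w)=f\,\Gamma(\log f,w)$ fails, so the square $\int f\,\Gamma(\log f-H)\,d\mu$ no longer reconstructs $J(f)$ and the above bookkeeping collapses. There I would instead use the reversible kernel representation
$$\mathcal E(a,b)=\frac12\sum_{x,y}\mu(x)K(x,y)\big(a(y)-a(x)\big)\big(b(y)-b(x)\big)$$
and try to deduce $\mathcal E(\log f-H,f)\ge0$ from the discrete \BL-condition $\sum_{i\in I_{x,y}}c_i\le1$ of Proposition~\ref{prop:discret}, combined with the joint convexity of $(a,b)\mapsto(a-b)\log(a/b)$ on $(0,\infty)^2$. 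The delicate point, and the one I expect to be the crux, is that the marginals $f_i=\mathbb{E}[f\mid T_i]$ are nonlocal averages over the fibers of $T_i$, so $\mathcal E(\log f-H,f)\ge0$ is \emph{not} an edgewise inequality: a naive edge-by-edge comparison only reproduces the identity $\sum_i c_iJ(f_i)=\mathcal E(H,f)$ and is circular. One must genuinely exploit the conditional-expectation structure to couple the contributions of different edges, and it is this combinatorial coupling step, rather than any single pointwise estimate, that carries the content in the general setting.
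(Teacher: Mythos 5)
Your reduction is sound as far as it goes. The identity $\sum_{i=1}^m c_i J(f_i)=\mathcal E(f,H)$ with $H=\sum_i c_i\log f_i$ is exactly the paper's \eqref{eq:Jmarg} (the paper derives it slightly differently, using that $L(\log f_i)$ is $T_i$-measurable together with \eqref{eq:conditional}, rather than $L(f_i)=(Lf)_i$, but both are valid). Your completing-the-square argument in the diffusion case is also correct, and is equivalent to the paper's Cauchy--Schwarz computation: expanding $\int f\,\Gamma(\log f-H)\,d\mu\ge 0$ and invoking \eqref{cond:Gamma} is the same convexity spent in a different order as the dual form $\sum_i c_i\,\Gamma(f,f_i)^2/\Gamma(f_i)\le\Gamma(f)$ used in the paper. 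But the theorem is stated for an \emph{arbitrary} Markov generator satisfying the \BL-condition, and for the non-diffusion case you offer only a program --- one you yourself flag as possibly circular. That is a genuine gap, and it sits precisely where the paper's new content lies.

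The missing idea is the paper's Lemma~\ref{lem:fisher}, a dual formulation of the Fisher information valid for any symmetric Markov generator and \emph{any} function $H$:
$$\mathcal E(f,H)\ \le\ J(f)+\int f\,e^{-H}L\big(e^{H}\big)\,d\mu .$$
Its proof requires no edgewise analysis and no joint-convexity lemma: one writes $K=L+\mathrm{Id}$ as a Markov kernel, uses symmetry and invariance to rewrite $\int f\,[H-KH]\,d\mu=\int\big(K(fH)-H\,Kf\big)\,d\mu$, and applies Young's inequality $ab\le a\log a-a+e^{b}$ pointwise under $K$, with the crucial choice $\lambda=f e^{-H}$ at the base point. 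The \BL-condition then enters only \emph{after} this duality step: taking $H=\sum_i c_i\log f_i$, inequality \eqref{ineq:L} with $F_i=\log f_i$ (each a function of $T_i$) gives pointwise $e^{-H}L(e^{H})\le\sum_i c_i\,L(f_i)/f_i$, and since $L(f_i)/f_i$ is $T_i$-measurable, \eqref{eq:conditional} and invariance yield $\int f\,L(f_i)/f_i\,d\mu=\int L(f_i)\,d\mu=0$, so $\mathcal E(f,H)\le J(f)$, which is exactly your reduced inequality $\mathcal E(\log f-H,f)\ge 0$. So the ``combinatorial coupling of edges'' you anticipate as the crux is not needed: the conditional-expectation structure is used only in this final averaging, and the convexity is spent once, globally, inside the duality lemma, rather than edge by edge against the discrete condition \eqref{cond:discret}. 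Your proposed route would in any case be narrower than the theorem, since it presupposes a reversible kernel representation of $\mathcal E$ and the edgewise form of the \BL-condition, whereas the paper's argument consumes \eqref{ineq:L} directly in its exponential form --- which is also why that form, and not its carr\'e-du-champ consequence, is the right hypothesis here.
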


Before proving this result in full generality, let us note that in the case  where  $L$ is a diffusion,   this theorem can be derived easily, following ideas from~\cite{B-CE-M}. Indeed, when $L$ is a diffusion we have  
$$J(f)=\int \frac{\Gamma(f)}{f}\, d\mu .$$
Using the definition of the conditional density~\eqref{eq:conditional} and the chain rule formula for $L$ 
we see that,  for each $i\le m$, 
$$J(f_i) =- \int f_i L(\log f_i) \, d\mu = -\int f L(\log f_i)  \, d\mu =  \int  \frac{\Gamma (f,  f_i)}{f_i} \,  d\mu .$$
Using the Cauchy-Schwarz inequality and~\eqref{eq:conditional} again we get
$$ J(f_i)^2
       \leq \int  \frac{\Gamma (f, f_i)^2}{f \, \Gamma (f_i)} \,  d\mu 
                   \int  \frac{ \Gamma (f_i) f}{ {f_i}^2 }  d\mu  
    = \int  \frac{\Gamma (f, f_i)^2}{f \, \Gamma (f_i)} \,  d\mu 
                   \int  \frac{ \Gamma (f,f_i)}{ f_i }  d\mu , 
$$
which means that 
$$ J(f_i)
     \leq  \int \frac{\Gamma ( f,f_i)^2}{ f \, \Gamma (f_i) } \, d\mu .  $$
We conclude to~\eqref{eq:subinfo} after noticing that  condition~\eqref{ineq:L} can be expressed in dual form as 
$$ \sum_{i=1}^m c_i \, \frac{ \Gamma (f,f_i)^2}{\Gamma (f_i) } \leq \Gamma (f). $$

Similar strategy however does not work in the non-diffusion  case. We present below a new method that allows us to treat the general case of a Markov generator. It relies on the following observation which is of independent interest.

\begin{lemma}\label{lem:fisher}
Assume $L$ is a  Markov generator  symmetric for $\mu$. Then for functions $f>0$ and $H$ of arbitrary sign on $E$ we have 
\begin{equation}\label{InfofH}
\mathcal E (f, H) \le  \mathcal E(f,\log f)+\int f e^{-H}L \big(e^H\big)\, d\mu.
\end{equation}
\end{lemma}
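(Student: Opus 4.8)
The plan is to reduce \eqref{InfofH} to a single scalar inequality at the level of the semigroup. First I would rewrite every term through the generator: since $L$ is symmetric (hence invariant) for $\mu$, the Dirichlet form satisfies $\mathcal E(f,g)=-\int f\,Lg\,d\mu$, so that \eqref{InfofH} is equivalent to the nonnegativity statement
$$\int f\Big(e^{-H}L(e^H)+LH-L(\log f)\Big)\,d\mu\ \ge\ 0.$$

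Next I would replace the generator by the semigroup. Writing $L=\lim_{s\to0^+}(P_s-\mathrm{Id})/s$ applied to each of $e^H$, $H$ and $\log f$, the left-hand side above equals
$$\lim_{s\to0^+}\frac1s\int f\Big(e^{-H}(P_se^H-e^H)+(P_sH-H)-(P_s\log f-\log f)\Big)\,d\mu.$$
The point of this move is that for fixed $s>0$ the quantity under the limit is an integral against the \emph{symmetric} measure $d\Pi_s(x,y):=P_s(x,dy)\,d\mu(x)$, reversibility of $P_s$ for $\mu$ being exactly the symmetry of $\Pi_s$. Expanding and symmetrizing this integral (swapping $x\leftrightarrow y$ and averaging) turns it into $\tfrac12\int \Phi\,d\Pi_s$, where, abbreviating $a=H(y)-H(x)$, $u=f(x)$, $v=f(y)$, the integrand is
$$\Phi=u(e^a-1)+v(e^{-a}-1)-(v-u)\big(a-\log(v/u)\big).$$

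It then remains to establish the elementary two-point inequality $\Phi\ge0$ for all $a\in\mathbb R$ and $u,v>0$. I would prove this by freezing $u,v$ and viewing $\Phi$ as a function of $a$: it is strictly convex (second derivative $ue^a+ve^{-a}>0$), its derivative $ue^a-ve^{-a}-(v-u)$ vanishes precisely at $a=\log(v/u)$, and the value there is $0$; hence $a=\log(v/u)$ is the global minimum and $\Phi\ge0$. Consequently the integral against $\Pi_s$ is nonnegative for every $s>0$, and dividing by $s>0$ and letting $s\to0^+$ yields the claim.

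I expect the only genuine subtlety to be the combinatorial one of guessing the right grouping of the three terms: it is the specific combination $e^{-H}L(e^H)+LH-L(\log f)$ that, after symmetrization, collapses to the single convex scalar function $\Phi$ whose minimum over $a$ is attained exactly at the ``correct'' value $a=\log(v/u)$ (corresponding to $H=\log f$, the equality case). The analytic issues---membership of $e^{\pm H}$ and $\log f$ in the relevant domains and the legitimacy of the generator and semigroup manipulations---are of the same nature as those deliberately left aside elsewhere in the paper; note in particular that no exchange of limit and integral is needed, since nonnegativity holds for each $s>0$ before passing to the limit.
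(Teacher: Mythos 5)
Your proof is correct, and it reaches the inequality by a genuinely different route than the paper's, although the two arguments share the same convexity core. The paper introduces $P:=L+\mathrm{Id}$, applies the pointwise Young inequality $st\le s\log s-s+e^{t}$ under $P$ with the optimized scaling $\lambda=fe^{-H}$, and integrates using symmetry and invariance of $P$; you instead write $L$ as the limit of $(P_s-\mathrm{Id})/s$, pass to the symmetric pair measure $d\Pi_s=P_s(x,dy)\,d\mu(x)$, and reduce everything to the two-point inequality $\Phi\ge0$, verified by a one-variable convexity computation that also exhibits the equality case $a=\log(v/u)$. These are two dressings of the same scalar fact: your $\Phi(a,u,v)$ equals $\Psi(a,u,v)+\Psi(-a,v,u)$, where $\Psi(a,u,v)=ue^{a}-va+v\log(v/u)-v\ge0$ is exactly Young's inequality with $s=v/u$, $t=a$, i.e. what the paper's choice $\lambda=fe^{-H}$ produces pointwise. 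Your organization buys something real: the paper's key step applies $P$ to a pointwise inequality, which is legitimate only if $P=L+\mathrm{Id}$ is positivity preserving, i.e. a Markov kernel --- true in the discrete setting $L=K-\mathrm{Id}$, but false for, say, a diffusion generator --- so for general symmetric Markov generators the paper's proof tacitly requires precisely the semigroup approximation you carry out, since $P_s$ is always Markov; conversely, the paper's version is shorter and avoids kernels and limits whenever $L+\mathrm{Id}$ is a kernel. Two minor remarks: (i) since $\Psi\ge0$ pointwise, full symmetrization is not strictly needed --- using the symmetry of $\Pi_s$ only to discard the antisymmetric part, via $\int(v-u)\,d\Pi_s=0$, already reduces the claim to $\Psi\ge0$; either way the symmetry hypothesis on $L$ is genuinely used. (ii) Your closing claim that no limit/integral interchange occurs is slightly overstated: identifying $\lim_{s\to0^+}s^{-1}\int f(\cdots)\,d\mu$ with $\int f\big(e^{-H}L(e^{H})+LH-L\log f\big)\,d\mu$ uses norm convergence of the difference quotients on the domain of $L$; this is standard and of the same nature as the domain questions the paper explicitly sets aside.
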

In other words we have the following dual formulation of Fisher information:
$$J(f) = \sup_H \left\{\mathcal E (f, H) - \int f e^{-H}L \big(e^H\big)\, d\mu \right\}.$$

\begin{proof}
We introduce the operator $P:=L+  {\rm Id}$ which is, as $L$, symmetric on $L^2(\mu)$. 
Replacing $L$ by $P - {\rm Id}$ we see that the inequality~\eqref{InfofH} to be proven rewrites as
\begin{equation}\label{InfofH2}
 \int f [H - PH]  d\mu  \leq  \int f\log f d\mu  - \int (Pf) \log f d\mu  - \int f d\mu
          + \int f e^{-H} P(e^H) d\mu .
 \end{equation}
By symmetry, the left-hand side is equal to $\int [P(fH) - HPf] d\mu $. By Young's inequality
$ab \leq a \log a - a + e^b$, $a>0$, $b \in \R$, we get that for every $\lambda >0$,
$$  P(fH) = \lambda P \Big ( \frac{f}{\lambda } \, H \Big)
        \leq   P(f \log f) - (Pf) \log \lambda  - Pf + \lambda P(e^H).$$
Hence, choosing $\lambda = f e^{-H}$,
$$ P(fH) - H Pf \leq P(f\log f) - (Pf) \log f - Pf + f e^{-H} P(e^H).$$
The desired inequality~\eqref{InfofH2} follows after integration, since for every $g$ we have $\int P g\, d\mu = \int g\, d\mu$. 
 \end{proof}

With the previous lemma in hand, we can easily complete the proof of the theorem.
\begin{proof}[Proof of Theorem~\ref{theo:subinfo}]
 Note that  the conditional expectation property yields, for every $i=1, \ldots, m$,
\begin{equation}\label{eq:Jmarg}
 J(f_i) = {\cal E} (f_i , \log f_i) =-\int f_i \,L(\log f_i) \,d\mu= -\int f\,  L(\log f_i) \,d\mu
 ={\cal E} (f, \log f_i).  
\end{equation}
Hence
$$ \sum_{i=1}^m c_i J(f_i) = \sum_{i=1}^m c_i \, {\cal E} (f, \log f_i)
   = {\cal E} (f, H),$$
where $ H = \sum_{i=1}^m c_i \log f_i$. Combining  Lemma~\ref{lem:fisher} and \BL-condition~\eqref{ineq:L}
(written for $F_i=\log f_i$ which is a function of $T_i$) we get
\begin{eqnarray*}
{\cal E} (f, H) &\le & \mathcal E(f,\log f)+\int f e^{-H}L \big(e^H\big)\, d\mu \\
   &\le & J(f)+ \int f \sum_i c_i  \frac{1}{f_i} \,  L(f_i) \, d\mu \\
   &=& J(f)+ \sum_i c_i \int   L(f_i) \, d\mu =J(f),
\end{eqnarray*}
where we have used in the last step
 that $L(f_i)/f_i$ is a function of $T_i$ and the conditional expectation property~\eqref{eq:conditional}.
\end{proof}

 Superadditive inequalities for Fisher information were considered on the sphere $S^{n-1}\subset\R^n$ in~\cite{B-CE-M} in the case of $T_i=P_{E_i}$ with the $E_i$ for subspaces $E_i\subset \R^n$ satisfying $\sum_i c_i P_{E_i}\le\mathrm{Id}_{\R^n}$. As explained in \S\ref{quotients},  the \BL-condition~\eqref{ineq:L} is verified for $d_i=c_i/2$ and we recover by the previous proposition the inequality from~\cite{B-CE-M}.

In the discrete case, some examples of superadditive inequalities for Fisher information were implicitly obtained in the papers~\cite{B, G-Q, G}. The goal of these papers is to prove modified log-Sobolev inequalities of the form
$$ \forall f:E\to \R^+\textrm{ with }  \int f \, d\mu = 1, \quad \rho_0 \int f\log f \, d\mu \le \mathcal E (f, \log f).$$
As pointed out to us by Eric Carlen, one can extract from their proofs (which is by induction)  superadditive
 inequalities for Fisher information which constitute a central technical ingredient. The main examples considered
 in theses papers are the symmetric group and slices of the discrete cube. There, the marginals are considered with
 respect to maps $T_i$ which belong to the family studied in the previous section, for which we have proved that 
 \BL-condition~\eqref{ineq:L}  holds, and for which we therefore have the desired superadditive inequalities.

\bigskip
\bigskip

\noindent
F. B., M. L.: Institut de Mathématiques de Toulouse (CNRS UMR 5219), Université Paul Sabatier, 31062 Toulouse cedex 9, France.  
barthe@math.univ-toulouse.fr, ledoux@math.univ-toulouse.fr
\smallskip

\noindent
D. C.-E.: 
Institut de Mathématiques de Jussieu (CNRS UMR 7586), Équipe d'Analyse Fonctionnelle,
Université Pierre et Marie Curie,
4, place Jussieu,
75252 Paris Cedex 05,
France.
cordero@math.jussieu.fr

\smallskip

\noindent
B. M.:  Laboratoire d'Analyse et de Mathématiques Appliquées (CNRS UMR 8050), 
Université de Marne-la-Vallée, 
77454 Marne-la-Vallée cedex 2, France.
bernard.maurey@univ-mlv.fr

\end{document}